\documentclass[a4paper]{jpconf}
\usepackage{graphicx}
\usepackage{dcolumn}
\usepackage{bm}

\bibliographystyle{iopart-num}

\usepackage{xcolor}
\usepackage{amsthm}
\usepackage{amsmath}
\usepackage{amssymb}
\usepackage{mathrsfs}

\usepackage{empheq}

\usepackage{dcolumn} 
\newcolumntype{d}[1]{D{.}{.}{#1}}

\usepackage{booktabs} 

\usepackage[labelformat=simple]{subcaption}
\captionsetup{justification=Justified}


\usepackage{txfonts}
\usepackage{hyperref}

\theoremstyle{definition}
\newtheorem{definition}{Definition}
\newtheorem{theorem}{Theorem}
\newtheorem{proposition}{Proposition}

\numberwithin{equation}{section}
\numberwithin{definition}{section}
\numberwithin{theorem}{section}
\numberwithin{remark}{section}
\numberwithin{proposition}{section}
\numberwithin{corollary}{section}

\newcommand{\dif}{\text{d}}

\newcommand{\ri}{\text{i}}
\newcommand{\rd}{\text{d}}

\allowdisplaybreaks


\begin{document}
\title{Orthonormal eigenfunction expansions for sixth-order boundary value problems}

\author{N C Papanicolaou$^1$ and I C Christov$^{1,2}$}

\address{$^1$ Department of Computer Science, University of Nicosia, 46 Makedonitissas Avenue, CY-2417 Nicosia, Cyprus}
\address{$^2$ School of Mechanical Engineering, Purdue University, West Lafayette, Indiana 47907, USA}

\ead{papanicolaou.n@unic.ac.cy,christov@purdue.edu}

\begin{abstract}
	Sixth-order boundary value problems (BVPs) arise in thin-film flows with a surface that has elastic bending resistance. To solve such problems, we first derive a complete set of odd and even orthonormal eigenfunctions --- resembling trigonometric sines and cosines, as well as the so-called ``beam'' functions. These functions intrinsically satisfy boundary conditions (BCs) of relevance to thin-film flows, since they are the solutions of a self-adjoint sixth-order Sturm--Liouville BVP with the same BCs. Next, we propose a Galerkin spectral approach for sixth-order problems; namely the sought function as well as all its derivatives and terms appearing in the differential equation are expanded into an infinite series with respect to the derived complete orthonormal (CON) set of eigenfunctions. The unknown coefficients in the series expansion are determined by solving the algebraic system derived by taking successive inner products with each member of the CON set of eigenfunctions. The proposed method and its convergence are demonstrated by solving two model sixth-order BVPs.
\end{abstract}




\section{Introduction}\label{sec:Intro}

Sixth-order parabolic equations arise in a number of physical contexts. For example, early work by King \cite{King1989} on isolation oxidation of silicon led to a model featuring a sixth-order parabolic equation due to the effect of diffusion and reaction on substrate deformation during the oxidation process. Importantly, King \cite{King1989} observed that the resulting partial differential equation is a higher-order version of the well-known nonlinear, degenerate parabolic equations of second and fourth order that describe the height of a thin liquid film during gravity- and surface-tension-driven spreading on a rigid surface. At vastly different scales, the flow of magma under earthen layers leads to the formation of domes called \emph{laccoliths}. The model used the describe the formation and spread of these geological features \cite{Michaut2011,Bunger2011,Bunger2011b,Thorey2014} also led to sixth-order parabolic equations, similar to the one derived by King \cite{King1989}, but usually in axisymmetric polar coordinates. 

In the last two decades, there has been significant interest in fundamental questions about the behavior of solutions of nonlinear sixth-order parabolic equations, motivated by problems of thin liquid films spreading underneath an elastic membrane \cite{Huang2002,Flitton2004,Hosoi2004,Hewitt2015,Pedersen2019,Peng2020}. With the advent of cheap and rapid fabrication methods and improved imaging modalities, a growing area of application for these models and studies has been actuation in microfluidics \cite{Boyko2019,Boyko2020}, including impact mitigation \cite{Tulchinsky2016}. In these contexts, it is often of interest to understand infinitesimal perturbations of the film and their stability, leading to sixth-order eigenvalue problems.

In the mathematics literature, general properties of sixth-order eigenvalue problems have been discussed by Greenberg and Marletta \cite{GM98,GM00}. However, the eigenfunctions are seldom explicitly constructed, the studies focusing instead on calculation of eigenvalues for problems arising in hydrodynamic stability \cite{Chandrasek} or vibrations of sandwich beams \cite{DiTaranto1965,Mead1969}. We surmise that constructing the eigenfunctions of sixth-order eigenvalue problems would be enlightening, and that these functions may resemble the well-known ``beam'' functions \cite{Chandrasek,Chri_Annuary_Chandra,PCB_IJNMF} of the fourth-order eigenvalue problem. 

Thus, in the present work, motivated by recent interest in sixth-order parabolic equations, we study a model initial-boundary-value problem for a sixth-order equation (section~\ref{sec:Target}), construct the eigenfunctions of its associated eigenvalue problem (section~\ref{sec:SLBVP}), use the latter to propose a Galerkin spectral method \cite{Boyd,Shen} for the solution of the former (section~\ref{sec:Galerkin}), and demonstrate the approach on two elementary model problems (section~\ref{sec:model_problems}).

\section{Problem formulation: elastic-plated thin film}\label{sec:Target}

\subsection{Problem definition and flow geometry}\label{sec:Target_definition}

Here, we study a thin fluid film of equilibrium height $h_0$, underneath an elastic interface in a closed trough of width $2\ell$, as shown in figure~\ref{fig:elastic_film_shematic}. The fluid is considered incompressible and Newtonian with (constant) density $\rho_f$ and (constant) dynamic viscosity $\mu_f$. The elastic sheet has  (constant) in-plane tension $T$ and (constant) bending resistance $B$. The sheet is idealized as an interface with no mass (thus, no inertia during its motion). Gravity is the only {body} force considered to act on the fluid, and it acts in the $-y$ direction. The elastic interface is free to move vertically along the lateral walls at $x=\pm\ell$ while maintaining a $90^\circ$ contact angle (non-wetting).

\begin{figure}[ht]
	\begin{center}
		\includegraphics[width=.65\textwidth]{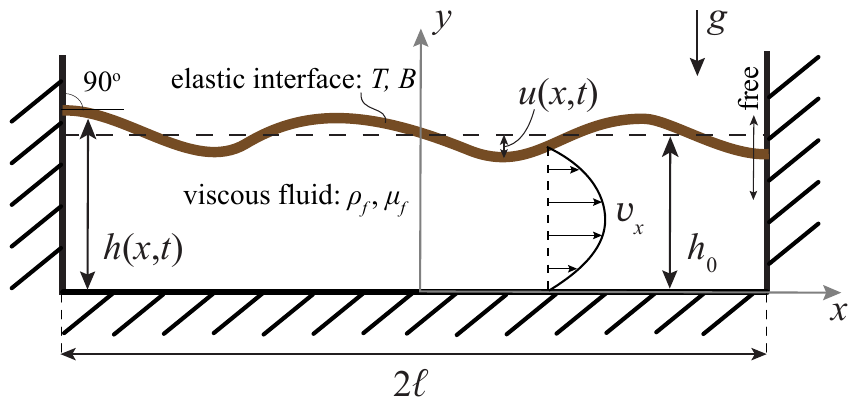}
	\end{center}
	\caption{Schematic of an elastic sheet (with in-plane elastic tension $T$, out-of-plane bending rigidity $B$, and negligible mass), overlaying a viscous fluid (density $\rho_f$ and dynamic viscosity $\mu_f$) of equilibrium height $h_0$ in a closed trough of axial width $2\ell$. A non-uniform height $h(x,t) = h_0 + u(x,t)$ drives a horizontal flow $\varv_x(x,y,t)$ under the elastic interface, and vice versa. Gravity acts in the $-y$-direction, and the gravitational acceleration is denoted by $g$.}
	\label{fig:elastic_film_shematic}
\end{figure}

\subsection{Derivation of the governing thin-film equation}

We study the dynamics of the thin film in the long-wave limit \cite{Oron1997,Craster2009}, also known as the lubrication approximation \cite{Leal2007,Stone2017LH}, such that $h_0\ll\ell$ as well as $\max_{x,t} h(x,t)\ll \ell$ during the film's evolution. Under this assumption, it can be shown \cite{Flitton2004,Hosoi2004,Hewitt2015,Pedersen2019} that the dimensional governing equations for the viscous fluid flow under the elastic sheet reduce to:
\begin{subequations}\begin{alignat}{3}
	&\text{conservation of mass:}\qquad &0 &= \frac{\partial \varv_x}{\partial x} + \frac{\partial \varv_y}{\partial y},\label{eq:continuity}\\
    &x\text{-momentum balance:}\qquad &0 &\approx -\frac{\partial p}{\partial x} + \mu_f \frac{\partial^2 \varv_x}{\partial y^2},\label{eq:elastic_film_xmom}\\
    &y\text{-momentum balance:}\qquad &0 &\approx -\frac{\partial p}{\partial y} - \rho_f g, \label{eq:elastic_film_ymom}\\
    &\text{dynamic interface condition:}\qquad &-p|_{y=h} &\approx T \frac{\partial^2 h}{\partial x^2}- B \frac{\partial^4 h}{\partial x^4} , \label{eq:elastic_film_interface}\\
    &\text{kinematic interface condition:}\qquad &\frac{\partial h}{\partial t} &= \varv_y|_{y=h}. \label{eq:elastic_film_kinematic}
\end{alignat}\label{eq:elastic_film_all}\end{subequations}
See also \cite{Huang2002,Howell2009} for more in-depth discussions of the dynamic boundary condition at a fluid--elastic solid interface and its linearization in the long-wave limit.

Under the lubrication approximation, the flow is approximately unidirectional and $\varv_y \ll \varv_x$. The dominant velocity component can be obtained from \eqref{eq:elastic_film_xmom} as
\begin{equation}
	\varv_x(x,y,t) = -\frac{1}{2\mu_f}\frac{\partial p}{\partial x} y[h(x,t)-y],
	\label{eq:elastic_film_v_x}
\end{equation}
having enforced the no-slip conditions $\varv_x(x,0,t)=\varv_x(x,h,t)=0$ at the bottom of the trough ($y=0$) and along the elastic interface ($y=h$).
Based on \eqref{eq:elastic_film_v_x}, the volumetric flux \emph{per unit width} is
\begin{equation}
	q := \int_0^{h(x,t)} \varv_x(x,y,t) \,\rd y = -\frac{h(x,t)^3}{12\mu_f}\frac{\partial p}{\partial x}.
\label{eq:elastic_film_flux}
\end{equation}

The vertical velocity can be obtained from the conservation of mass equation~\eqref{eq:continuity} using $\varv_x$ from \eqref{eq:elastic_film_v_x} as
\begin{equation}
	\varv_y(x,y,t) = - \frac{\partial}{\partial x}\int_0^{y} \varv_x(x,\tilde{y},t) \,\rd \tilde{y},
	\label{eq:elastic_film_v_y}
\end{equation}
having enforce the no-penetration condition $\varv_y(x,0,t)=0$ at the bottom of the trough ($y=0$). Using \eqref{eq:elastic_film_flux} and \eqref{eq:elastic_film_v_y}, the kinematic condition~\eqref{eq:elastic_film_kinematic} becomes
\begin{equation}
	\frac{\partial h}{\partial t} = - \frac{\partial q}{\partial x}\,.
	\label{eq:elastic_film_continuity}
\end{equation}

Next, fixing the pressure at the interface $y=h(x,t)$ to be the ``ambient pressure" $p_a=\mathrm{const.}$, equations~\eqref{eq:elastic_film_ymom} and \eqref{eq:elastic_film_interface} allow us to solve for the pressure distribution in the thin fluid film:
\begin{equation}
	p(x,y,t) = p_a + \rho_f g[h(x,t)-y] - T \frac{\partial^2 h}{\partial x^2} + B \frac{\partial^4 h}{\partial x^4}.
\label{eq:elastic_film_p}
\end{equation}
Finally, using the pressure~\eqref{eq:elastic_film_p} to eliminate $\partial p/\partial x$ from the flow rate expression~\eqref{eq:elastic_film_flux}, and substituting the result into \eqref{eq:elastic_film_continuity}, we obtain the (long-wave) \emph{thin film equation}:
\begin{equation}
	\frac{\partial h}{\partial t} = \frac{\partial}{\partial x} \left[ \frac{h^3}{12\mu_f}\left( \rho_f g  \frac{\partial h}{\partial x} - T \frac{\partial^3 h}{\partial x^3} + B \frac{\partial^5 h}{\partial x^5} \right)\right].
	\label{eq:elastic_film}
\end{equation}
Observe that \eqref{eq:elastic_film} is a nonlinear partial differential equation (PDE), nominally of parabolic type, but with possible degeneracy at a ``contact line'' where $h\to0$, which leads to a wealth of possible contact-line behaviors \cite{Flitton2004,Lister2019} beyond the scope of this work.

\subsection{Nondimensionalization}

We introduce the dimensionless (``hat'') variables via the transformations:
\begin{equation}
	x = \ell\hat{x}, \qquad t = \frac{12\mu_f \ell^6}{Bh_0^3} \hat{t}, \qquad h = h_0 \hat{h}.
	\label{eq:elastic_film_ndvar}
\end{equation}
We focus on the case in which the dominant force on the interface is the bending resistance, thus the choice of time scale is $t_c = {12\mu_f \ell^6}/{Bh_0^3}$, as in \cite{Tulchinsky2016,MartinezCalvo2020}.
Substituting the variables from \eqref{eq:elastic_film_ndvar} into \eqref{eq:elastic_film} and dropping the hats, we arrive at:
\begin{equation}
	\frac{\partial h}{\partial t} = \frac{\partial}{\partial x} \left[ h^3\left( \mathcal{B} \frac{\partial h}{\partial x} - \mathcal{T} \frac{\partial^3 h}{\partial x^3} + \frac{\partial^5 h}{\partial x^5} \right) \right].
	\label{eq:elastic_film_nd3}
\end{equation}
The following dimensionless numbers have arisen:
\begin{subequations}\begin{alignat}{3}
	&\text{Elastic Bond number:}\qquad &\mathcal{B} &= \frac{\rho_f g \ell^4}{B},\\
	&\text{Tension number:}\qquad &\mathcal{T} &= \frac{T\ell^2}{B}.
	\label{eq:elastic_film_ndnum}
\end{alignat}\end{subequations}
The elastic Bond number $\mathcal{B}$ quantifies the relative importance of gravity's role in deforming the elastic interface relative to its bending resistance (see also discussion in \cite{Duprat2011}). The tension number $\mathcal{T}$ quantifies the relative importance of in-plane tension's role in deforming the elastic interface relative to its bending resistance (see also the discussions in \cite{Howell2009,Anand2020} and \cite{Boyko2019,Boyko2020}).

We are interested in small perturbations to a flat film, $h=1$, so we introduce $h(x,t) = 1 + \varepsilon u(x,t)$ with $\varepsilon\ll1$ into \eqref{eq:elastic_film_nd3} and keep only terms at $\mathrm{O}(\varepsilon)$, to obtain
\begin{equation}
	\frac{\partial u}{\partial t} = \mathcal{B} \frac{\partial^2 u}{\partial x^2} - \mathcal{T}\frac{\partial^4 u}{\partial x^4} + \frac{\partial^6 u}{\partial x^6}.
	\label{eq:elastic_film_linear}
\end{equation}
Equation~\eqref{eq:elastic_film_linear} is the \emph{linear} sixth-order PDE of interest in the remainder of this work.

\subsection{Boundary conditions}

Equation~\eqref{eq:elastic_film_linear} requires six boundary conditions (BCs). Some are discussed in \cite{Hosoi2004} for a thin film problem and in \cite[section~4.4.2]{Howell2009} in the context of beams. According to the problem posed in section~\ref{sec:Target_definition} and shown in figure~\ref{fig:elastic_film_shematic}, we are interested in a nonwetting, non-pinned film in a closed trough. A non-wetting film will maintain a $90^\circ$ contact at the wall, thus
\begin{subequations}
\begin{equation}
	\left. \frac{\partial u}{\partial x}\right|_{x=\pm1} = 0.
	\label{eq:elastic_film_BC1}
\end{equation}
Since the film is not pinned (i.e., it is free), $u(\pm1,t)$ is free and not necessarily $=0$. Similarly, since the film is confined in a trough with walls, the walls provide shear force/resistance to the motion, so that $(\partial^3 u/\partial x^3)|_{x=\pm1}$ is not necessarily $=0$, but this in turn requires that there be no moment at the wall:
\begin{equation}
	\left. \frac{\partial^2 u}{\partial x^2}\right|_{x=\pm1} = 0. \label{eq:elastic_film_BC2}\\
\end{equation}
Finally, we observe from the derivation of \eqref{eq:elastic_film}, that if the fluid flux $q$ is to vanish at $x=\pm1$ (closed trough) then, taking into account \eqref{eq:elastic_film_BC1} and \eqref{eq:elastic_film_BC2}, we must additionally require that 
\begin{equation}
	\left. \left[ -\mathscr{T} \frac{\partial^3 u}{\partial x^3} + \frac{\partial^5 u}{\partial x^5} \right]\right|_{x=\pm1} = 0. \nonumber
\end{equation}
If we restrict ourselves to cases in which tension in negligible compared to bending ($\mathscr{T}\to0$), then
\begin{equation}
	\left. \frac{\partial^5 u}{\partial x^5}\right|_{x=\pm1} = 0. \label{eq:elastic_film_BC3}
\end{equation}\label{eq:elastic_film_BCs}%
\end{subequations}

\section{The sixth-order Sturm--Liouville boundary value problem}\label{sec:SLBVP}

In the context of the PDE~\eqref{eq:elastic_film_linear} subject to the BCs~\eqref{eq:elastic_film_BCs}, we consider the following sixth-order initial boundary value problem (IBVP) for $u=u(x,t)$:
\begin{subequations}
	\begin{empheq} [left = \empheqlbrace]{alignat=2}
		\frac{\partial u}{\partial t} - \mathcal{B} \frac{\partial^2 u}{\partial x^2} + \mathcal{T}\frac{\partial^4 u}{\partial x^4} - \frac{\partial^6 u}{\partial x^6} &= f(x,t), \quad &(x,t)\in(-1,1)\times\in(0,\infty), \label{eq:PDE_6}\\
		\left.\frac{\partial u}{\partial x}\right|_{x=\pm1} = \left.\frac{\partial^2 u}{\partial x^2}\right|_{x=\pm1} = \left.\frac{\partial^5 u}{\partial x^5}\right|_{x=\pm1} &= 0,\quad &t\in(0,\infty), \label{eq:BC_6}\\
		u(x,0) &= u^0(x), \quad &x\in(-1,1),
	\end{empheq}\label{eq:IBVP_6}%
\end{subequations}
where $u^0(x)$ and $f(x,t)$ are given smooth functions. Although, on physical grounds, we took $\mathscr{T}\to0$ to derive BC~\eqref{eq:elastic_film_BC3}, we have kept $\mathscr{T}$ in \eqref{eq:PDE_6} for the purposes of the mathematical discussion that follow.

To solve IBVP~\eqref{eq:IBVP_6}, we will employ a Galerkin method (section~\ref{sec:Galerkin}). To this end, we must construct a suitable set of eigenfunctions, and determine their properties, which is the subject of the remainder of this section.

\subsection{Self-adjointness, orthogonality and completeness}

The Sturm--Liouville eigenvalue problem (EVP) associated with IBVP~\eqref{eq:IBVP_6} reads:
\begin{subequations}
	\label{eq:6th_EVP}
	\begin{align}
		\label{eq:ODE_hg_6} 
		 -\frac{\rd^6 \psi}{\rd x^6} &= \lambda^6 \psi , 	\\
		\left.\frac{\rd \psi}{\rd x}\right|_{x=\pm1} = \left.\frac{\rd^2 \psi}{\rd x^2}\right|_{x=\pm1}&= \left.\frac{\rd^5 \psi}{\rd x^5}\right|_{x=\pm1}=0 .
		\label{eq:6th_BCs}
	\end{align}
\end{subequations}	
Before we proceed with the derivation of the solutions (eigenfunctions) of  EVP~\eqref{eq:6th_EVP}, we first discuss why they possess certain properties required for the development of our Galerkin method in section~\ref{sec:Galerkin}.   
The desired results follow as direct applications of the comprehensive general theory on eigenvalue problems found in the seminal book of Coddington and Levinson~\cite{CodLev}. To facilitate this discussion for the reader, we quote or paraphrase the relevant definitions and theorems from \cite{CodLev} accordingly. We begin with the definitions.
\begin{definition}{[Adjoint of a linear operator (from \cite{CodLev}, chapter~3, section~6)]}
	 Consider the linear differential operator 
	 \begin{equation}
	 	\mathscr{L}_n(\,\cdot\,) := a_0\frac{\rd^n}{\rd x^n}(\,\cdot\,) + a_1\frac{\rd^{n-1}}{\rd x^{n-1}}(\,\cdot\,) + \cdots + (a_n\,\cdot\,),
	 \end{equation}	
	 where $a_0, a_1, \ldots, a_n$ are continuous (possibly complex-valued) functions of $x$ on some interval $[a,b]\subset\mathbb{R}$, with $a_0(x)\neq0$ $\forall x\in [a,b]$. Then, the adjoint of $\mathscr{L}_n $ is another linear operator, denoted $\mathscr{L}^+_n$, also of order $n$ given by
	 \begin{equation}
			\mathscr{L}^+_n(\,\cdot\,) := (-1)^n\frac{\rd^n}{\rd x^n}(\bar{a}_0\,\cdot\,) + (-1)^{n-1}\frac{\rd^{n-1}}{\rd x^{n-1}}(\bar{a}_1\,\cdot\,) + \cdots + (\bar{a}_n\,\cdot\,) ,
      \end{equation}
      where the bar (\, $\bar{\ }$\, ) denotes the complex conjugate.
\end{definition}	
	
	Throughout this work we will use the notation $\langle \cdot,\cdot \rangle$ for the  $L^2$ inner product on the interval $[a,b]$, that is, for any $\phi, \psi\in L^2[a,b]$, 
	\begin{equation}
		\langle \phi, \psi \rangle := \int_{a}^{b} \phi(x) \psi(x) \,\dif x .
	 \end{equation}
	 Also, $\|\cdot\|$ will denote the associated $L^2$ norm unless explicitly specified otherwise. 
	
	 \begin{definition}	{[Self-adjoint eigenvalue problem (from \cite{CodLev}, chapter~7, section~2)]}
	 	
	 	 Let $\mathscr{L}_n$ be the $n$-th order operator given by  
	 	 \begin{subequations}
	 	 	\label{eq:selfadjoint_EVP_gen}
	 	 \begin{equation}
	 	 	\label{eq:nth_order_operator}
	 	 	\mathscr{L}_n \phi = p_0\frac{\rd^n \phi}{\rd x^n} + p_1\frac{\rd^{n-1} \phi}{\rd x^{n-1}} + \cdots + p_n \phi,
	 	 \end{equation}	
	 	 where the $p_i(x)$ are (possibly complex-valued) functions of class $\mathcal{C}^{n-j}$, $j=0,1,\ldots,n$ on the closed interval $[a,b]$ and $p_0(x)\neq0$ on $[a,b]$. Let
	 	 \begin{equation}
	 	 	\label{eq:nth_order_BCs}
	 	 	U_j \phi = \sum_{k=1}^{n} \bigg(M_{jk}\phi^{(k-1)}(a) + N_{jk}\phi^{(k-1)}(b)\bigg) \qquad \qquad (j=1,\ldots,n),
	 	 \end{equation}
	 	 where the $M_{jk}$ and $N_{jk}$ are constants. Denote the boundary conditions $U_j\phi = 0$,  $j=1,\ldots,n$ by $\mathscr{U}\phi=0$. 
	 	 The eigenvalue problem 
	 	 \begin{equation}
	 	      \mathscr{L}_n \phi = \lambda \phi \, ,\qquad \mathscr{U}\phi=0,
	 	\end{equation}
	 	 is \emph{self-adjoint} if  
	 	\begin{equation}
	 		\label{eq:nth_order_selfadjoint}
	 		\langle \mathscr{L}_n \phi, \psi \rangle = \langle \phi, \mathscr{L}_n  \psi \rangle
	 	\end{equation}	
	 	 for all $\phi$, $\psi$ $\in$ $\mathcal{C}^n[a,b]$ which satisfy the boundary conditions $\mathscr{U}\phi=\mathscr{U}\psi=0$. 
	 	  \end{subequations}
	 \end{definition}

The first important consequence of self-adjointness is reflected in theorem~\ref{thm:enum_ortho}.
\begin{theorem}{[From \cite{CodLev}, theorem~2.1 of chapter~7, section~2]}
	\label{thm:enum_ortho}
	A self-adjoint EVP has a real, enumerable set of eigenvalues with corresponding distinct and mutually-orthogonal eigenfunctions.
\end{theorem}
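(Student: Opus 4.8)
The plan is to reconstruct, for completeness, the Coddington--Levinson argument: all three assertions --- reality of the eigenvalues, mutual orthogonality of the eigenfunctions, and enumerability of the spectrum --- should be extracted from the single identity \eqref{eq:nth_order_selfadjoint}, supplemented by standard facts about regular two-point boundary value problems.

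First I would dispose of reality and orthogonality, which are purely formal. Let $(\lambda,\phi)$ and $(\mu,\psi)$ be two eigenpairs, with $\phi,\psi\in\mathcal{C}^n[a,b]$, $\phi,\psi\not\equiv0$, and $\mathscr{U}\phi=\mathscr{U}\psi=0$. Then \eqref{eq:nth_order_selfadjoint} gives
\[
\lambda\,\langle\phi,\psi\rangle=\langle\mathscr{L}_n\phi,\psi\rangle=\langle\phi,\mathscr{L}_n\psi\rangle=\mu\,\langle\phi,\psi\rangle,
\]
hence $(\lambda-\mu)\langle\phi,\psi\rangle=0$. Since the operator and boundary conditions of interest (e.g.\ those in \eqref{eq:6th_EVP}) are real, $(\bar\lambda,\bar\phi)$ is again an eigenpair; taking $\mu=\bar\lambda$, $\psi=\bar\phi$ in the display and using $\langle\phi,\bar\phi\rangle=\|\phi\|^2>0$ forces $\lambda=\bar\lambda\in\mathbb{R}$. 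With reality established, the display applied to any pair with $\lambda\neq\mu$ yields $\langle\phi,\psi\rangle=0$, so eigenfunctions belonging to distinct eigenvalues are orthogonal, hence linearly independent; and the eigenspace of a fixed $\lambda$, being contained in the $n$-dimensional solution space of $\mathscr{L}_n\phi=\lambda\phi$, is finite-dimensional.

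The substantive part is enumerability: the eigenvalues must form a countable set with no finite accumulation point. I would argue this by either of two classical routes. Operator-theoretically: replace $\mathscr{L}_n$ by $\mathscr{L}_n-c\,\mathrm{Id}$ for a constant $c$ that is not an eigenvalue (which merely shifts all eigenvalues by $c$ and preserves the eigenfunctions), so that the shifted operator is boundedly invertible with inverse an integral operator $\mathscr{G}$ on $L^2[a,b]$ whose kernel, the Green's function $G(x,\xi)$, is continuous on the compact square $[a,b]\times[a,b]$; then $\mathscr{G}$ is compact, so its nonzero spectrum is a sequence of finite-multiplicity eigenvalues accumulating only at $0$, and the eigenvalues of $\mathscr{L}_n$ are their reciprocals shifted back by $c$. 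Alternatively, and more elementarily: pick a fundamental system $\phi_1(\,\cdot\,,\lambda),\dots,\phi_n(\,\cdot\,,\lambda)$ of $\mathscr{L}_n\phi=\lambda\phi$ analytic in $\lambda$, form the characteristic determinant $\Delta(\lambda)=\det\big[\,U_j\phi_k(\,\cdot\,,\lambda)\,\big]_{j,k=1}^{n}$, note that $\lambda$ is an eigenvalue precisely when $\Delta(\lambda)=0$, and verify that $\Delta$ is entire and not identically zero, whence its zero set is discrete and at most countable.

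The main obstacle is exactly this last step. Reality and orthogonality need nothing beyond \eqref{eq:nth_order_selfadjoint}; enumerability, by contrast, requires genuine analytic input --- the existence and continuity of the Green's function together with the compactness of $\mathscr{G}$, or else an asymptotic analysis of the fundamental solutions sufficient to rule out $\Delta\equiv0$ --- as well as the nondegeneracy hypotheses that (a shift of) $\mathscr{L}_n$ be invertible and that the boundary forms $U_1,\dots,U_n$ be linearly independent. These hold for the regular, self-adjoint two-point problem \eqref{eq:6th_EVP}, so theorem~\ref{thm:enum_ortho} applies to it without modification; in fact, the explicit eigenfunctions constructed in the remainder of this section will exhibit the enumerable, orthogonal family concretely, furnishing an a posteriori check.
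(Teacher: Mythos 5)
This theorem is stated in the paper purely as a citation --- it is quoted from Coddington and Levinson (theorem~2.1 of chapter~7, section~2) and no proof is given or attempted in the text, so there is no in-paper argument to compare against. Your reconstruction is, in substance, the standard textbook proof and is essentially the one found in the cited reference: the formal identity \eqref{eq:nth_order_selfadjoint} yields reality and orthogonality, and enumerability comes from the analyticity of the characteristic determinant $\Delta(\lambda)$ (or, equivalently, from compactness of the Green's operator). Two refinements are worth noting. First, in Coddington--Levinson the inner product carries a complex conjugate, so reality follows in one line from $\lambda\|\phi\|^2=\langle\mathscr{L}_n\phi,\phi\rangle=\langle\phi,\mathscr{L}_n\phi\rangle=\bar{\lambda}\|\phi\|^2$ with no assumption that the coefficients or boundary forms are real; your detour through the conjugate eigenpair $(\bar{\lambda},\bar{\phi})$ is forced by the paper's bilinear (unconjugated) definition of $\langle\cdot,\cdot\rangle$ and is valid only for real problems --- which is all that is needed for EVP~\eqref{eq:6th_EVP}, but is narrower than the theorem as stated for possibly complex-valued $p_i$. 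Second, the step you flag as the ``main obstacle,'' namely showing $\Delta\not\equiv0$, does not require asymptotics of the fundamental system: once reality is established, every non-real $\lambda_0$ fails to be an eigenvalue, so $\Delta(\lambda_0)\neq0$ there, and the entire function $\Delta$ therefore has an isolated, at most countable zero set. This is the economy that makes the Coddington--Levinson argument self-contained, and it closes the only gap in your sketch.
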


We now show the following proposition.
\begin{proposition}	
	\label{thm:6th_EVP_self_adj}
	The sixth-order EVP~\eqref{eq:6th_EVP} with BCs~\eqref{eq:6th_BCs} is \emph{self-adjoint}. 
\end{proposition}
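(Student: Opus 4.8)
The plan is to verify the self-adjointness criterion~\eqref{eq:nth_order_selfadjoint} directly, with $n=6$. First I would observe that the operator $\mathscr{L}_6(\,\cdot\,) := -\rd^6(\,\cdot\,)/\rd x^6$ of EVP~\eqref{eq:6th_EVP}, on $[a,b]=[-1,1]$, is exactly of the form~\eqref{eq:nth_order_operator} with $p_0\equiv-1$ and $p_1=\cdots=p_6\equiv0$ (all constant, hence of every required class $\mathcal{C}^{n-j}$, and $p_0\neq0$), and that the three conditions imposed at each of $x=-1$ and $x=+1$ in~\eqref{eq:6th_BCs} constitute six separated boundary forms of the type~\eqref{eq:nth_order_BCs}. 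Thus the abstract framework applies, and it remains only to establish~\eqref{eq:nth_order_selfadjoint} for all $\phi,\psi\in\mathcal{C}^6[-1,1]$ satisfying~\eqref{eq:6th_BCs}. (Since $p_0=-1$ is real, the complex conjugates in the definition of the adjoint play no role here.)

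Next I would compute $\langle\mathscr{L}_6\phi,\psi\rangle - \langle\phi,\mathscr{L}_6\psi\rangle = -\int_{-1}^{1}\big(\phi^{(6)}\psi - \phi\,\psi^{(6)}\big)\,\rd x$ by six successive integrations by parts --- equivalently, by Green's formula (Lagrange's identity) for the formally self-adjoint even-order operator $\rd^6/\rd x^6$. Every interior term cancels because $6$ is even, leaving only the bilinear boundary concomitant
\begin{equation}
	\mathcal{P}(\phi,\psi) := -\phi^{(5)}\psi + \phi^{(4)}\psi' - \phi'''\psi'' + \phi''\psi''' - \phi'\psi^{(4)} + \phi\,\psi^{(5)},
\end{equation}
so that $\langle\mathscr{L}_6\phi,\psi\rangle - \langle\phi,\mathscr{L}_6\psi\rangle = \big[\mathcal{P}(\phi,\psi)\big]_{x=-1}^{x=1}$.

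The final --- and essentially the only substantive --- step is to check that $\mathcal{P}(\phi,\psi)$ vanishes at both endpoints. Each of the six summands is a product $\phi^{(a)}\psi^{(b)}$ with $a+b=5$, and by~\eqref{eq:6th_BCs} the derivative orders annihilated at $x=\pm1$ (for \emph{both} $\phi$ and $\psi$) are $\{1,2,5\}$; since the complementary set $\{0,3,4\}$ satisfies $5-\{0,3,4\}=\{1,2,5\}$, every such pair $(a,b)$ has at least one factor equal to zero there. Concretely: $\phi^{(5)}\psi$ and $\phi\,\psi^{(5)}$ vanish because $\phi^{(5)}=\psi^{(5)}=0$; $\phi^{(4)}\psi'$ and $\phi'\psi^{(4)}$ vanish because $\phi'=\psi'=0$; and $\phi'''\psi''$ and $\phi''\psi'''$ vanish because $\phi''=\psi''=0$. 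Hence $\big[\mathcal{P}(\phi,\psi)\big]_{-1}^{1}=0$, giving $\langle\mathscr{L}_6\phi,\psi\rangle=\langle\phi,\mathscr{L}_6\psi\rangle$, which is the claim. I do not expect a genuine obstacle: the argument is a bookkeeping exercise in integration by parts. The one point worth care is the alternating-sign pattern in $\mathcal{P}$ together with the observation that the reflection $a\mapsto 5-a$ carries the triple $\{1,2,5\}$ of vanishing derivative orders onto its complement $\{0,3,4\}$ in $\{0,\dots,5\}$ --- this is the structural reason these particular BCs (rather than an arbitrary choice of three) yield a self-adjoint problem.
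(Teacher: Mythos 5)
Your proposal is correct and follows essentially the same route as the paper's proof: six integrations by parts reduce $\langle\mathscr{L}\phi,\psi\rangle-\langle\phi,\mathscr{L}\psi\rangle$ to the alternating-sign boundary concomitant (the paper's $J(\psi,\phi)$ in~\eqref{eq:Jpp_6}), which vanishes at $x=\pm1$ under BCs~\eqref{eq:6th_BCs}. Your added observation that the reflection $a\mapsto 5-a$ maps the annihilated derivative orders $\{1,2,5\}$ onto their complement $\{0,3,4\}$ is a nice structural explanation of \emph{why} these particular boundary conditions kill every term, and your symmetric treatment of both $\phi$ and $\psi$ as arbitrary $\mathcal{C}^6$ functions satisfying the BCs matches the quoted definition of self-adjointness exactly.
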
	

\begin{proof}
The operator $\mathscr{L} := -\rd^6/\rd x^6$ from \eqref{eq:ODE_hg_6} is symmetric ($\mathscr{L}=\mathscr{L}^+$).  Let $\psi(x)$ be a solution of EVP~\eqref{eq:6th_EVP} and $\phi(x)\in\mathcal{C}^6[-1,1]$ be any function satisfying BCs~\eqref{eq:6th_BCs}. The $L^2[-1,1]$ inner product between equation~\eqref{eq:ODE_hg_6} and $\phi$ is:
\begin{equation}
	\int_{-1}^{+1} -\frac{\rd^6 \psi}{\rd x^6} \phi \,\rd x = \lambda^6 \int_{-1}^{+1} \psi\phi \,\rd x.
\end{equation}
Integrating the left-hand side by parts yields
\begin{equation}
	- \left.\left[ \frac{\rd^5\psi}{\rd x^5}\phi \right]\right|_{x=-1}^{x=+1}
	+ \int_{-1}^{+1} \frac{\rd^5 \psi}{\rd x^5} \frac{\rd\phi}{\rd x} \,\rd x = \lambda^6 \int_{-1}^{+1} \psi\phi\, 	\rd x.
	\label{eq:i_parts_6_1}
\end{equation}
Repeating five more times, we find
\begin{equation}
	\langle \mathscr{L}\psi,\phi \rangle = J(\psi,\phi) + \langle \psi,\mathscr{L}\phi \rangle,
	\label{eq:Lpp_6}
\end{equation}
where
\begin{equation}
	J(\psi,\phi) := \left.\left[
	\frac{\rd^5\psi}{\rd x^5}\phi
	- \frac{\rd^4 \psi}{\rd x^4}\frac{\rd\phi}{\rd x}
	+ \frac{\rd^3 \psi}{\rd x^3}\frac{\rd^2\phi}{\rd x^2}
	- \frac{\rd^2 \psi}{\rd x^2}\frac{\rd^3\phi}{\rd x^3}
	+ \frac{\rd \psi}{\rd x}\frac{\rd^4\phi}{\rd x^4}
	- \psi\frac{\rd^5\phi}{\rd x^5}
	\right]\right|_{x=-1}^{x=+1} .
	\label{eq:Jpp_6}
\end{equation}

For the BCs~\eqref{eq:6th_BCs}, we find $J(\psi,\phi) =0$. Hence,  
\begin{equation}
	\langle \mathscr{L}\psi,\phi \rangle = \langle \psi,\mathscr{L}\phi \rangle,
	\label{eq:self_adj_6}
\end{equation}
for any solution $\psi(x)$ of EVP~\eqref{eq:6th_EVP} and any function $\phi(x)\in\mathcal{C}^6[-1,1]$ satisfying the BCs~\eqref{eq:6th_BCs}.
\end{proof}

The second important result regarding self-adjoint EVPs was shown in the expansion and completeness theorems, namely, theorem~4.1 and theorem~4.2 of chapter~7, section~4, of \cite{CodLev}. We paraphrase this result as follows.
\begin{proposition}{[Expansion and completeness]}
	\label{thm:Exp_comp_gen}
	Let $\{\chi_m\}_{m=1}^{\infty}$ be an orthonormal sequence of solutions of the self-adjoint EVP~\eqref{eq:selfadjoint_EVP_gen} on $[a,b]$. Then, for any $f\in L^2[a,b]$,  
	\begin{equation}
		f=\sum_{m=1}^{\infty} \langle f,\chi_m\rangle \chi_m,
	\end{equation}	
	where the equality implies convergence in the $L^2$ norm.
\end{proposition}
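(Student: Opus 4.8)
The plan is to deduce the expansion from the \emph{completeness} of the orthonormal eigenfunction system $\{\chi_m\}_{m=1}^{\infty}$ in $L^2[a,b]$, and to establish completeness through the spectral theory of compact self-adjoint operators. First I would observe that, for any $f\in L^2[a,b]$, Bessel's inequality shows the partial sums $S_N f:=\sum_{m=1}^{N}\langle f,\chi_m\rangle\chi_m$ form a Cauchy sequence, hence converge in $L^2$ to some $g$ with $\langle g,\chi_m\rangle=\langle f,\chi_m\rangle$ for every $m$; so the claimed identity is equivalent to showing that the only element of $L^2[a,b]$ orthogonal to all the $\chi_m$ is $0$, i.e.\ that the $\chi_m$ span $L^2[a,b]$.

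To prove that, I would pass from the differential operator to an integral operator. By Theorem~\ref{thm:enum_ortho}, the eigenvalues of the self-adjoint EVP~\eqref{eq:selfadjoint_EVP_gen} are real and enumerable, so one may fix a real constant $c$ that is not an eigenvalue. Then the boundary value problem $(\mathscr{L}_n-c)\phi=h$, $\mathscr{U}\phi=0$, is uniquely solvable for every $h$, and its solution operator $K$ is an integral operator whose kernel is the Green's function $G(x,\xi)$, built in the classical way from a fundamental system of solutions of $(\mathscr{L}_n-c)\phi=0$ together with the boundary functionals $U_j$. Since $G$ is continuous on $[a,b]\times[a,b]$, the operator $K$ is Hilbert--Schmidt, hence compact; the self-adjointness condition~\eqref{eq:nth_order_selfadjoint} forces $G(x,\xi)=\overline{G(\xi,x)}$, so $K$ is also self-adjoint on $L^2[a,b]$; and $K$ is injective, since $Kh=0$ gives $h=(\mathscr{L}_n-c)Kh=0$.

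Next I would invoke the Hilbert--Schmidt theorem: the eigenfunctions of the compact self-adjoint operator $K$ belonging to its nonzero eigenvalues form an orthonormal basis of $\overline{\mathrm{ran}\,K}=(\ker K)^{\perp}$, which, because $\ker K=\{0\}$, is all of $L^2[a,b]$. Finally, $K\phi=\mu\phi$ with $\mu\neq0$ holds precisely when $\mathscr{L}_n\phi=(c+\mu^{-1})\phi$ and $\mathscr{U}\phi=0$, so the eigenfunctions of $K$ are exactly those of the original EVP (each eigenspace being finite-dimensional, one orthonormalizes within it). Thus $\{\chi_m\}_{m=1}^{\infty}$ is a complete orthonormal system, and combining this with the first step yields $f=\sum_{m=1}^{\infty}\langle f,\chi_m\rangle\chi_m$ in the $L^2[a,b]$ sense.

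The step I expect to carry all the real content --- and the only genuine obstacle --- is the construction of the Green's function: one must check that $c$ can be chosen off the spectrum (immediate from Theorem~\ref{thm:enum_ortho}) and that the resulting inhomogeneous problem is uniquely solvable with a continuous, Hermitian kernel. For the concrete sixth-order problem~\eqref{eq:6th_EVP} this is routine but tedious; note that $\lambda=0$ is in fact an eigenvalue there, since constants satisfy~\eqref{eq:6th_BCs}, so the shift by $c\neq0$ is genuinely necessary. In the general setting this is exactly the content of Theorems~4.1--4.2 of chapter~7 of \cite{CodLev}, which is why one would simply cite that reference rather than reproduce the Green's-function machinery.
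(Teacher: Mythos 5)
The paper gives no proof of this proposition: it is stated explicitly as a paraphrase of theorems~4.1 and 4.2 of chapter~7, section~4 of \cite{CodLev}, and the result is simply cited. Your sketch correctly reconstructs the standard argument behind that citation --- reduction to completeness via Bessel's inequality, passage to the Green's-function integral operator for a real shift $c$ off the (real, enumerable) spectrum, compactness and self-adjointness of the resulting Hilbert--Schmidt operator, and the spectral theorem for compact self-adjoint operators --- and your observation that the shift must be nonzero for the concrete problem~\eqref{eq:6th_EVP}, since constants are eigenfunctions, is apt.
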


\subsection{Constructing the orthonormal eigenfunctions}

We now solve EVP~\eqref{eq:6th_EVP} and derive our complete orthonormal (CON) set of solutions. The characteristic equation corresponding to \eqref{eq:ODE_hg_6} has roots
\begin{equation}
	\left\{ +\lambda \ri, -\lambda \ri, \left(\tfrac{\ri}{2} + \tfrac{\sqrt{3}}{2}\right)\lambda, \left(\tfrac{\ri}{2} - \tfrac{\sqrt{3}}{2}\right)\lambda, \left(-\tfrac{\ri}{2} + \tfrac{\sqrt{3}}{2}\right)\lambda, \left(-\tfrac{\ri}{2} - \tfrac{\sqrt{3}}{2}\right)\lambda \right\}.
\end{equation} 
Therefore, the general solution of the homogeneous version of \eqref{eq:ODE_hg_6} can be written in the form
\begin{multline}
	\label{eq:gensol_ODE_hg_6} 
	\psi(x) = \overbrace{c_1 \sin(\lambda x) 
		+ c_5 \sin\left(\tfrac{1}{2} \lambda x\right) \cosh \left(\tfrac{\sqrt{3}}{2} \lambda x\right) 
		+ c_6 \cos\left(\tfrac{1}{2} \lambda x\right) \sinh \left(\tfrac{\sqrt{3}}{2} \lambda x\right)}^{\psi^s(x)\;\;\text{(odd)}}\\
	+ \underbrace{c_2 \cos(\lambda x)
		+ c_3 \sin\left(\tfrac{1}{2} \lambda x\right) \sinh \left(\tfrac{\sqrt{3}}{2} \lambda x\right)
		+ c_4 \cos\left(\tfrac{1}{2} \lambda x\right) \cosh \left(\tfrac{\sqrt{3}}{2} \lambda x\right)}_{\psi^c(x)\;\;\text{(even)}}.
\end{multline}
We have chosen to express the solution in this way, following the example of the so-called ``beam'' functions \cite{Chandrasek,Chri_Annuary_Chandra,PCB_IJNMF}, in order to have odd and even sets of eigenfunctions resembling trigonometric sines and cosines.  

Imposing BCs~\eqref{eq:6th_BCs} on each of the $\psi^s(x)$ and $\psi^c(x)$ separately yields homogeneous linear systems for the  vectors $\bm{c}_{\mathrm{odd}}=(c_1,c_5,c_6)^\top$ and $\bm{c}_{\mathrm{even}}=(c_2,c_3,c_4)^\top$, respectively. To find nontrivial solutions we set the determinants of the coefficient matrices equal to zero and arrive at the eigenvalue relations:
\begin{subequations}\label{eq:evals_6_125}\begin{alignat}{3}
		&\text{even}:\qquad & \cos (2 \lambda^c) + \sqrt{3}\sin\lambda^c \sinh(\sqrt{3}\,\lambda^c) - \cos\lambda^c \cosh(\sqrt{3}\,\lambda^c) &= 0,  \label{eq:evals_e_3_6}\\
		&\text{odd}:\qquad & \sin (2 \lambda^s) + \sqrt{3}\cos\lambda^s \sinh(\sqrt{3}\,\lambda^s) + \sin\lambda^s \cosh(\sqrt{3}\,\lambda^s) &= 0.  \label{eq:evals_o_3_6}
\end{alignat}\end{subequations}

\begin{table}[ht]
		\caption{\label{tab:eigenvalues}``Even'' and ``odd'' eigenvalues, evaluated by solving the transcendental equations~\eqref{eq:evals_6_125} with \textsc{Mathematica}'s  \cite{Mathematica} \texttt{FindRoot}, as well as the proposed asymptotic formulas.} 
	\centering 
	\begin{tabular}{l*{4}{d{12}}}    
		\toprule
		& \multicolumn{2}{c}{even} & \multicolumn{2}{c}{odd}\\ \cmidrule(lr){2-3}  \cmidrule(lr){4-5}\\[-4mm]
		\multicolumn{1}{c}{$m$} & \multicolumn{1}{c}{$\lambda_m^c$} & \multicolumn{1}{c}{$(m+1/6)\pi$} & \multicolumn{1}{c}{$\lambda_m^s$} & \multicolumn{1}{c}{$(m-1/3)\pi$}\\
		\midrule
		\multicolumn{1}{l|}{0} & 0 & - & - & -\\
		\multicolumn{1}{l|}{1} & 3.66606496814 & 3.66519142919 & 2.07175679767 & 2.09439510239\\
		\multicolumn{1}{l|}{2} & 6.80678029161 & 6.80678408278 & 5.23608751229 & 5.23598775598\\
		\multicolumn{1}{l|}{3} & 9.94837675280 & 9.94837673637 & 8.37757997731 & 8.37758040957\\
		\multicolumn{1}{l|}{4} & 13.0899693899 & 13.0899693900 & 11.5191730650 & 11.5191730632\\
		\multicolumn{1}{l|}{5} & 16.2315620436 & 16.2315620435 & 14.6607657167 & 14.6607657168\\
		\multicolumn{1}{l|}{6} & 19.3731546971 & 19.3731546971 & 17.8023583704 & 17.8023583703\\
		\bottomrule
	\end{tabular}
\end{table}

Each solution $\lambda_m^c$ of  \eqref{eq:evals_e_3_6} is an eigenvalue corresponding to an even eigenfunction $\psi^c_m$ of EVP~\eqref{eq:6th_EVP}, whereas each solution $\lambda_m^s$ of  \eqref{eq:evals_o_3_6} corresponds to an odd eigenfunction $\psi^s_m$. 
Eigenvalue relations~\eqref{eq:evals_6_125} were solved numerically using a highly accurate numerical solver---\textsc{Mathematica}'s \texttt{FindRoot} \cite{Mathematica} with 16 digits of working precision. We also derived large-$\lambda$ asymptotic formulas for the eigenvalues. Using \eqref{eq:evals_e_3_6}, we found that $\lambda_m^c \sim (m+1/6)\pi$ as $\lambda\to\infty$, whereas \eqref{eq:evals_o_3_6} yields $\lambda_m^s \sim (m-1/3)\pi$ as $\lambda\to\infty$. The results are presented in table~\ref{tab:eigenvalues}. As we can see, the asymptotic formulas are extremely accurate even for  $m=6$, with the formulas for $\lambda_m^c$ and $\lambda_m^s$ agreeing with the results obtained with \texttt{FindRoot} for 12 and 11 digits, respectively. Thus, in practice when implementing a Galerkin method, there will be no need to use numerical root-finding to determine $\lambda_m^c$ and $\lambda_m^s$ for $m\geq7$.

What remains is to determine the constants $c_2$, $c_3$ and $c_4$ for the set of even functions $\{\psi_m^c\}$ and $c_1$, $c_5$ and $c_6$ for the set  of odd functions $\{\psi_m^s\}$, which were introduced in \eqref{eq:gensol_ODE_hg_6}. These constants are found by imposing BCs~\eqref{eq:6th_BCs}, utilizing the eigenvalue relations~\eqref{eq:evals_6_125}, and normalizing the eigenfunctions with respect to the $L^2$ norm. After a lengthy calculation, using \textsc{Mathematica}'s algebraic manipulation capabilities, we arrive at the expressions: 
\begin{subequations}\label{eq:efuncs_6_125}
	\begin{align}
		\psi_m^c(x) &= c^c_m \Bigg\{
		\tfrac{ 4\sin{\lambda_m^c}}{\cos{\lambda_m^c} - \cosh{(\sqrt{3}\,\lambda_m^c)}} \bigg[ -\cos{\tfrac{\lambda_m^c}{2}} \sinh{\tfrac{\sqrt{3}\,\lambda_m^c}{2}} \sin{\tfrac{\lambda_m^c}{2} x} \sinh{\tfrac{\sqrt{3}\, \lambda_m^c }{2}  x} \nonumber\\ 
		&\qquad\quad +\sin{\tfrac{\lambda_m^c}{2}} \cosh{\tfrac{\sqrt{3}\,\lambda_m^c}{2}} \cos{\tfrac{\lambda_m^c}{2}  x} \cosh{\tfrac{\sqrt{3}\,\lambda_m^c}{2} x} \bigg] + \cos{\lambda_m^c x} \Bigg\},\label{eq:efuncs_6_125_even} \\
		\psi_m^s(x) &= c^s_m \Bigg\{
		\tfrac{ 4\cos{\lambda_m^s}}{\cos{\lambda_m^s} +  \cosh{(\sqrt{3}\,\lambda_m^s)} } \bigg[ -\cos{\tfrac{\lambda_m^s}{2}} \cosh{\tfrac{\sqrt{3}\,\lambda_m^s}{2}} \sin{\tfrac{\lambda_m^s}{2} x} \cosh{\tfrac{\sqrt{3}\,\lambda_m^s}{2}  x}  \nonumber \\		
		&\qquad\quad +\sin{\tfrac{\lambda_m^s}{2}} \sinh{\tfrac{\sqrt{3}\,\lambda_m^s}{2}} \cos{\tfrac{\lambda_m^s}{2}  x} \sinh{ \tfrac{\sqrt{3}\,\lambda_m^s}{2}  x} \bigg] +  \sin(\lambda_m^s x) \Bigg\}, 	\label{eq:efuncs_6_125_odd} 
	\end{align}
where
\begin{align}
	c^c_m &=2 \sqrt{\tfrac{ \lambda^c_m}{d_m^c}} \left[\cos\lambda^c_m - \cosh (\sqrt{3}\,\lambda^c_m)\right], \label{eq:c_m^c_defn}\\
	c^s_m &= 2 \sqrt{\tfrac{\lambda_m^s}{d^s_m}} \left[\cos\lambda_m^s + \cosh (\sqrt{3}\,\lambda_m^s)\right],\label{eq:c_m^s_defn}\\
	d^c_m &= \sin{4\lambda^c_m} - 6\lambda^c_m(\cos{2\lambda^c_m} - 2) + 2 \lambda^c_m \cosh{(2\sqrt{3}\lambda^c_m)} + 2 \sin{2\lambda^c_m} \cosh^2{\sqrt{3}\,\lambda^c_m} \nonumber\\
	&\phantom{=} + \cosh{\sqrt{3}\,\lambda^c_m} \left[\sin{\lambda^c_m} -3 \sin {3\lambda^c_m} + 4\lambda^c_m (\cos{3\lambda^c_m} - 3\cos{\lambda^c_m})\right] \nonumber\\
	&\phantom{=} + 4\sqrt{3} \sin^2{\lambda^c_m} \sinh{\sqrt{3}\,\lambda^c_m} (\cos{\lambda^c_m} - \cosh{\sqrt{3}\lambda^c_m}),\label{eq:d_m^c_defn}\\
	d^s_m &= 12 \lambda^s_m - 3\sin{(2\lambda^s_m)} - \sin{(4\lambda^s_m)} + 10 \lambda^s_m \cos{(2 \lambda^s_m)}\nonumber\\
	&\phantom{=} -[\sin (2\lambda^s_m) - 2\lambda^s_m] \cosh(2 \sqrt{3}\lambda^s_m) - 4\sqrt{3}\cos^2\lambda_m^s \sinh(\sqrt{3} \lambda^s_m) [\cos\lambda^s_m + \cosh(\sqrt{3} \lambda^s_m)] \nonumber\\
	&\phantom{=} +2 \cos\lambda^s_m \cosh(\sqrt{3} \lambda^s_m) \left( 4 \lambda^s_m [\cos(2\lambda^s_m) + 2] - 3 \sin(2 \lambda^s_m) \right), \label{eq:d_m^s_defn}
\end{align}%
\end{subequations}
for $n,m\in\mathbb{N}$.

It is important to note here that $\lambda_0^c=0$ is an eigenvalue of EVP~\eqref{eq:6th_EVP} with corresponding eigenfunction $\psi_0^c(x)=1$. The notation is due to the fact that $\psi(x)=1$ is an even function.  The profiles of the first members of the two sets of eigenfunctions are presented in figure~\ref{fig:efunc_125}.

\begin{figure}[ht]
	\centering
	\begin{subfigure}[b]{0.47\textwidth}
		\centering
		\includegraphics[width=\textwidth]{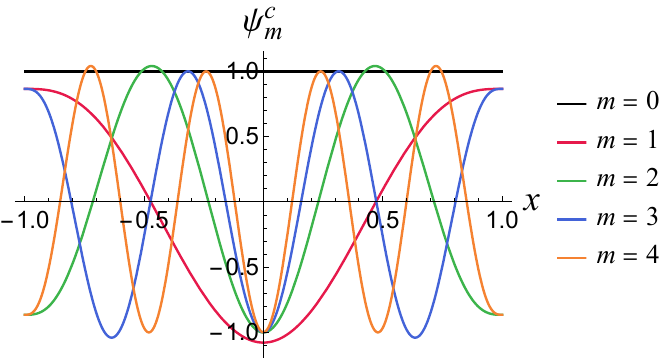}
		\caption{Even eigenfunctions.~\label{fig:efunc_125_even}}
	\end{subfigure}
	\hfill
	\begin{subfigure}[b]{0.47\textwidth}
		\centering
		\includegraphics[width=\textwidth]{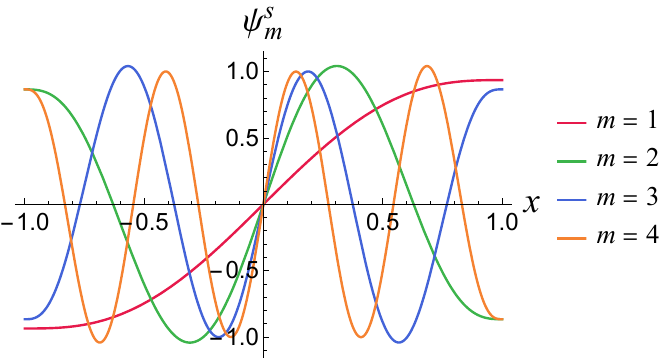}
		\caption{Odd eigenfunctions.~\label{fig:efunc_125_odd}}
	\end{subfigure}
	\caption{The profiles of the (a) even and (b) odd eigenfunctions for $m=1,2,3,4$, along with them the eigenfunction $\psi_0^c(x)=1$ for $m=0$.~\label{fig:efunc_125}}
\end{figure}

We conclude the this section by stating the following proposition, which forms the foundation of our spectral method.
	\begin{proposition}{[Complete orthonormal set]}
		\label{prop:EVP_beam_6}
		The two sets of solutions $\psi_m^c$ and $\psi_m^s$ of  Sturm--Liouville problem~\eqref{eq:6th_EVP} given by \eqref{eq:efuncs_6_125} and supplemented by $\psi_0^c(x) = 1$ form a \emph{complete orthonormal set of functions} (CON) on $L^2[-1,1]$. Therefore, any function $u(x)\in L^2[-1,1]$ can be expanded as 
		\begin{subequations}
			\label{eq:full_expansion_formula} 
			\begin{alignat}{2}
				u(x) &= \frac{1}{2} u_0^c \psi_0^c(x) + \sum_{m=1}^\infty u_m^c \psi_m^c(x) + u_m^s \psi_m^s(x), \quad & \label{eq:u_expansion_formula}\\
				u_0^c &:= \big\langle u(x), \psi_0^c(x) \big\rangle = \int_{-1}^{+1} u(x) \, \rd x, \quad &\\
				u_m^c &:= \left\langle u(x), \psi_m^c(x) \right\rangle = \int_{-1}^{+1} u(x) \psi_m^c(x) \,\rd x,\quad &m\ge 1,\\
				u_m^s &:= \left\langle u(x), \psi_m^s(x) \right\rangle = \int_{-1}^{+1} u(x) \psi_m^s(x) \,\rd x,\quad &m\ge 1,
			\end{alignat}
		\end{subequations}
		with the series \eqref{eq:full_expansion_formula} converging to $u(x)$ in $L^2[-1,1]$.
	\end{proposition}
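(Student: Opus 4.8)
The plan is to obtain the claim as a direct consequence of the abstract machinery already in place. By Proposition~\ref{thm:6th_EVP_self_adj} the EVP~\eqref{eq:6th_EVP} is self-adjoint, so Theorem~\ref{thm:enum_ortho} provides a real, enumerable family of eigenvalues with mutually orthogonal eigenfunctions, and Proposition~\ref{thm:Exp_comp_gen} will deliver the $L^2$ expansion the moment we know that our list of functions is an orthonormal sequence containing \emph{every} eigenfunction. Hence the proof reduces to three checks: (i) the functions $\psi_0^c$, $\{\psi_m^c\}_{m\ge1}$, $\{\psi_m^s\}_{m\ge1}$ are genuine eigenfunctions of \eqref{eq:6th_EVP}; (ii) after the trivial rescaling of $\psi_0^c$ they form an orthonormal system; and (iii) no eigenfunction has been omitted.

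For (i), each $\psi_m^c$ and $\psi_m^s$ is by construction a linear combination of the six fundamental solutions in \eqref{eq:gensol_ODE_hg_6}, hence solves the ODE~\eqref{eq:ODE_hg_6} with $\lambda=\lambda_m^c$ or $\lambda=\lambda_m^s$; substituting the explicit forms \eqref{eq:efuncs_6_125} into the three BCs~\eqref{eq:6th_BCs} and using the eigenvalue relations \eqref{eq:evals_e_3_6}--\eqref{eq:evals_o_3_6} confirms that the BCs hold, which is the lengthy computation mentioned just before \eqref{eq:efuncs_6_125} and is most easily carried out with a computer algebra system. For (ii), orthogonality of any $\psi_m^c$ to any $\psi_k^s$ is automatic because $[-1,1]$ is symmetric while $\psi_m^c$ is even and $\psi_k^s$ is odd; orthogonality of $\psi_m^c$ to $\psi_k^c$ for $m\ne k$, and likewise within the odd family, follows from Theorem~\ref{thm:enum_ortho} once one observes that the roots $\lambda_m^c$ of \eqref{eq:evals_e_3_6} are distinct (a coincidence of two eigenvalues would be harmless in any case, one simply orthonormalizing within the finite-dimensional eigenspace). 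Normalization is where the constants $c_m^c$, $c_m^s$, $d_m^c$, $d_m^s$ enter: computing $\|\psi_m^c\|^2=\int_{-1}^{1}|\psi_m^c(x)|^2\,\rd x$ and reducing the resulting trigonometric--hyperbolic integrals with the aid of \eqref{eq:evals_e_3_6} produces the denominator $d_m^c$, whereupon the stated choice of $c_m^c$ gives $\|\psi_m^c\|=1$; the odd case is analogous. Finally $\psi_0^c\equiv1$ has $\|\psi_0^c\|^2=2$, so it is deliberately left un-normalized; the honestly orthonormal system is $\{\psi_0^c/\sqrt2\}\cup\{\psi_m^c\}_{m\ge1}\cup\{\psi_m^s\}_{m\ge1}$, and the factor $\tfrac12=1/\|\psi_0^c\|^2$ multiplying $u_0^c\psi_0^c$ in \eqref{eq:u_expansion_formula} is precisely the compensating weight.

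Step (iii) is the conceptual heart. The operator $\mathscr{L}=-\rd^6/\rd x^6$ commutes with the reflection $x\mapsto-x$, and the BCs~\eqref{eq:6th_BCs} are invariant under it; hence if $\psi$ is any eigenfunction of \eqref{eq:6th_EVP} with eigenvalue $\lambda^6$, then its even part $\tfrac12[\psi(x)+\psi(-x)]$ and its odd part $\tfrac12[\psi(x)-\psi(-x)]$ are each either identically zero or an eigenfunction for the same $\lambda^6$. It therefore suffices to enumerate the even and the odd eigenfunctions separately, which is exactly what was done: for $\lambda\ne0$ the even solutions of \eqref{eq:ODE_hg_6} span the three-dimensional space $\psi^c(x)$ of \eqref{eq:gensol_ODE_hg_6}, imposing the three BCs gives the homogeneous system for $\bm{c}_{\mathrm{even}}$ whose solvability condition is precisely \eqref{eq:evals_e_3_6}, and at each root the nontrivial solution is a multiple of $\psi_m^c$; symmetrically \eqref{eq:evals_o_3_6} produces $\psi_m^s$. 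The residual case $\lambda=0$ is handled by hand: $\psi^{(6)}\equiv0$ forces $\psi$ to be a polynomial of degree $\le5$, and the six BCs \eqref{eq:6th_BCs} then annihilate every coefficient except the constant, leaving $\psi_0^c\equiv1$. Thus the stated set contains every eigenfunction of the self-adjoint EVP~\eqref{eq:6th_EVP}; applying Proposition~\ref{thm:Exp_comp_gen} to the orthonormal system above gives $u=\langle u,\psi_0^c/\sqrt2\rangle\,(\psi_0^c/\sqrt2)+\sum_{m\ge1}\big(\langle u,\psi_m^c\rangle\psi_m^c+\langle u,\psi_m^s\rangle\psi_m^s\big)$ with convergence in $L^2[-1,1]$, and since $\langle u,\psi_0^c/\sqrt2\rangle\,(\psi_0^c/\sqrt2)=\tfrac12\langle u,\psi_0^c\rangle\psi_0^c=\tfrac12 u_0^c\psi_0^c$, this is exactly \eqref{eq:full_expansion_formula}.

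The main obstacle is twofold. The genuinely laborious part is the normalization in step (ii): verifying by explicit integration that the closed forms in \eqref{eq:efuncs_6_125} for $c_m^c$, $c_m^s$, $d_m^c$, $d_m^s$ really do yield unit-norm eigenfunctions. The genuinely delicate part is step (iii): the abstract completeness theorem must be fed the \emph{entire} eigenfunction system, so one cannot skip the argument that the parity decomposition --- reducing the sixth-order problem to the two scalar relations \eqref{eq:evals_e_3_6} and \eqref{eq:evals_o_3_6} together with the isolated $\lambda=0$ mode --- captures all the eigenfunctions and misses none.
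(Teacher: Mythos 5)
Your proposal is correct and follows essentially the same route as the paper, whose entire proof is the one-line citation ``follows from Proposition~\ref{thm:6th_EVP_self_adj}, Theorem~\ref{thm:enum_ortho} and Proposition~\ref{thm:Exp_comp_gen}.'' You simply make explicit the supporting verifications the paper leaves implicit --- chiefly that the parity decomposition together with the $\lambda=0$ mode exhausts the eigenfunctions, and that the $\tfrac12$ weight on $u_0^c\psi_0^c$ compensates for $\|\psi_0^c\|^2=2$ --- which is a faithful elaboration rather than a different argument.
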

The proposition follows from proposition~\ref{thm:6th_EVP_self_adj}, theorem~\ref{thm:enum_ortho} and proposition~\ref{thm:Exp_comp_gen}.
It is clear that the only nonzero coefficients in the spectral expansion~\eqref{eq:full_expansion_formula} of any even (or odd) function $f\in L^2[-1,1]$ are the $u_m^c$ (or $u_m^s$). 

In this section $n$ indicated the order of the EVP, and $m$ was the index used for the countable infinity of eigenvalues and eigenfunctions, we have now restricted ourselves to a specific sixth-order EVP. Thus,  in the following sections, we reuse $n$ as the index for the countable infinity of eigenvalues and eigenfunctions to simplify the notation.

\section{The Galerkin spectral method}\label{sec:Galerkin}

\subsection{Method formulation}\label{subsec:MethForm}
Having established the necessary theoretical foundation and derived our CON basis functions, we proceed with the development of the proposed Galerkin spectral method \cite{Boyd,Shen} for IBVP~\eqref{eq:IBVP_6}. To solve the sixth-order PDE~\eqref{eq:PDE_6} using a Galerkin method based on the results above, we expand the sought function (i.e., the solution of the IBVP) $u(x,t)$ as in \eqref{eq:full_expansion_formula}, allowing the expansion coefficients to be functions of time, and truncate the series at $M$ terms:
\begin{equation}
	u(x,t) \approx u_{\mathrm{spectral}}(x,t) = \frac{1}{2} u_0^c(t) \psi_0^c(x) + \sum_{n=1}^{M} u_n^c(t) \psi_n^c(x) + u_n^s(t) \psi_n^s(x).	
	\label{eq:u_x_t_expansion_formula}
\end{equation}
Note that the spectral expansion~\eqref{eq:u_x_t_expansion_formula} will intrinsically satisfy BCs~\eqref{eq:BC_6}, which is an important advantage of  the classical Galerkin approach.

Next, we must express all the $x$-derivatives that appear in the PDE~\eqref{eq:PDE_6} as linear combinations of the same basis functions:
\begin{subequations}\label{eq:deriv_expansion_series}
	\begin{alignat}{2}
		\frac{\rd^2\psi_n^{c,s}}{\rd x^2}  &= \frac{1}{2}\beta_{n0}^{c,s} + \sum_{m=1}^\infty \beta_{nm}^{c,s} \psi_{m}^{c,s}(x),\qquad\text{where}\quad &\beta_{nm}^{c,s} &:= \left\langle \frac{\rd^2\psi_n^{c,s}}{\rd x^2}  ,\psi_m^{c,s} \right\rangle = \int_{-1}^{+1} \frac{\rd^2\psi_n^{c,s}}{\rd x^2} \psi_m^{c,s} \,\rd x,  \label{eq:beta_series}\\
		\frac{\rd^4\psi_n^{c,s}}{\rd x^4}  &= \frac{1}{2}\gamma_{n0}^{c,s} + \sum_{m=1}^\infty \gamma_{nm}^{c,s} \psi_{m}^{c,s}(x),\qquad\text{where}\quad &\gamma_{nm}^{c,s} &:= \left\langle \frac{\rd^4\psi_n^{c,s}}{\rd x^4}  ,\psi_m^{c,s} \right\rangle = \int_{-1}^{+1} \frac{\rd^4\psi_n^{c,s}}{\rd x^4} \psi_m^{c,s} \,\rd x. \label{eq:gamma_series}
	\end{alignat}%
\end{subequations}
Note that, while $\beta_{n0}^{s}=\gamma_{n0}^s=0$ by definition, it can be shown that $\beta_{n0}^c=0$ but $\gamma_{n0}^c\ne0$. The expressions for $\beta_{nm}^{c,s}$ and $\gamma_{nm}^{c,s}$ will be given in section~\ref{sec:expansion_formulas} below. It follows that
\begin{subequations}\label{eq:deriv_spectral_series}
	\begin{align}
		\frac{\partial^2 u_{\mathrm{spectral}}}{\partial x^2}  &=  \sum_{n=1}^M \sum_{m=1}^M u_n^c(t) \beta_{nm}^{c} \psi_{m}^{c}(x) + \sum_{n=1}^M \sum_{m=1}^M u_n^s(t) \beta_{nm}^{s} \psi_{m}^{s}(x),\\
		\frac{\partial^4 u_{\mathrm{spectral}}}{\partial x^4}  &=  \sum_{n=1}^M u_n^c(t) \left[ \frac{1}{2}\gamma_{n0}^{c} + \sum_{m=1}^M \gamma_{nm}^{c} \psi_{m}^{c}(x) \right] + \sum_{n=1}^M \sum_{m=1}^M u_n^s(t) \gamma_{nm}^{s} \psi_{m}^{s}(x),\\
		\frac{\partial^6 u_{\mathrm{spectral}}}{\partial x^6}  &=  -\sum_{n=1}^M (\lambda^{c}_n)^6 u_n^c(t) \psi_n^c(x) - \sum_{n=1}^M (\lambda^{s}_n)^6 u_n^s(t) \psi_{n}^{s}(x).
	\end{align}%
\end{subequations}

Next, substituting \eqref{eq:u_x_t_expansion_formula} and \eqref{eq:deriv_spectral_series} into \eqref{eq:PDE_6}, taking successive inner products with $\psi_0^c(x)=1$, $\big\{\psi_{\ell}^c(x)\big\}_{\ell=1}^{M}$ and $\big\{\psi_{\ell}^s(x)\big\}_{\ell=1}^{M}$, and using the orthogonality of the eigenfunctions, we obtain a semi-discrete (dynamical) system:
\begin{subequations}\label{eq:dynamical_system}
	\begin{alignat}{2}
		\frac{\rd u_0^c}{\rd t} &= \sum_{n=1}^M \left[-\mathcal{T}\gamma_{n0}^{c}\right]u_n^c(t) + f_0^c, \quad &(\ell=0),\\
		\frac{\rd u_{\ell}^c}{\rd t} &=  \sum_{n=1}^M \left[ - \mathcal{T}\gamma_{n \ell}^{c} + \mathcal{B}\,\beta_{n \ell}^{c} - (\lambda^{c}_n)^6 \delta_{nl} \right] u_n^c(t) + f_\ell^c, \qquad &\ell\ge 1,\\
		\frac{\rd u_{\ell}^s}{\rd t} &= \sum_{n=1}^M \left[-\mathcal{T}\gamma_{n \ell}^{s} + \mathcal{B}\,\beta_{n \ell}^{s} - (\lambda^{s}_n)^6 \delta_{nl} \right] u_n^s(t) + f_\ell^s, \qquad &\ell\ge 1.
	\end{alignat}
\end{subequations}
where $\{f_0^c(t),f_m^c(t),f_m^s(t)\}$ are the expansion coefficients of the known right-hand side $f(x,t)$ of \eqref{eq:PDE_6}, according to proposition~\ref{prop:EVP_beam_6}. System~\eqref{eq:dynamical_system} can be solved using a Crank--Nicolson-type approach for the time discretization, but its solution is the object of future work.

\subsection{Expansion formulas}\label{sec:expansion_formulas}

We first present the formula for expanding the second derivative of an even eigenfunction into a series of the even basis functions. After a lengthy calculation, using \textsc{Mathematica}'s algebraic manipulation capabilities, from \eqref{eq:beta_series} we find that for a fixed $n\in\mathbb{N}$, and any $m=1,2,3,\ldots$
\begin{equation}
	\label{eq:sec_deriv_even_form}
	\beta_{nm}^c =
	\begin{cases} 
		\frac{6 c_n^c c_m^c (\lambda _m^c)^3 (\lambda _n^c)^3}{(\lambda _m^c)^6 - (\lambda _n^c)^6}\left[ \frac{\lambda _m^c \sin{\lambda _n^c} \left(\cos{2\lambda _m^c}-\sqrt{3} \sin{\lambda _m^c} \sinh{\sqrt{3}\,\lambda _m^c} - \cos{\lambda _m^c} \cosh{\sqrt{3}\lambda _m^c}\right)}{\cos{\lambda _m^c}-\cosh{\sqrt{3} \lambda _m^c}}\right. &\\
		\qquad \qquad \qquad \qquad \left. +\frac{\lambda _n^c \sin{\lambda _m^c} \left(-\cos{2\lambda _n^c}+\sqrt{3} \sin{ \lambda _n^c} \sinh{\sqrt{3}\,\lambda _n^c}+\cos{\lambda _n^c} \cosh{\sqrt{3} \lambda _n^c}\right)}{\cos{\lambda _n^c}-\cosh{\sqrt{3} \lambda _n^c}}\right]\,\ \hfill \text{for} \  n\neq m,&\\[7mm]
		- \frac{\lambda _n^c {(c_n^c)}^2}{{{8 (\cos \lambda _n^c - \cosh \sqrt{3} \lambda _n^c)^2}}}    \bigg[\lambda _n^c \left(3 \cos{2\lambda _n^c} + \sinh^2{\sqrt{3}\lambda _n^c}+\cosh^2{\sqrt{3}\,\lambda _n^c} \right. &\\
		\left. \qquad \qquad \qquad  \qquad \qquad \qquad \quad  \,+ 4 \sqrt{3} \sin^3{\lambda _n^c} \sinh{\sqrt{3}\,\lambda _n^c} - 4 \cos^3{\lambda _n^c}\cosh{\sqrt{3}\,\lambda _n^c}\right)  & \\
		\qquad \qquad \ +\,2 \sin{\lambda _n^c} (\cos{\lambda _n^c}-\cosh{\sqrt{3} \lambda _n^c}) &\\
		\qquad \qquad \qquad \times \left(\sqrt{3} \sin{\lambda _n^c} \sinh{\sqrt{3}\lambda _n^c}+\cos{\lambda _n^c} \cosh{\sqrt{3} \lambda _n^c} -\cos{2 \lambda _n^c}\right)\bigg]\,\ \hfill \text{for} \  n= m,&
	\end{cases}
\end{equation}
where $c_n^c$ and $c_m^c$ are given by \eqref{eq:c_m^c_defn}.
 
\begin{figure}[h]
	\centering
	\begin{subfigure}[b]{0.47\textwidth}
		\centering
		\includegraphics[width=\textwidth]{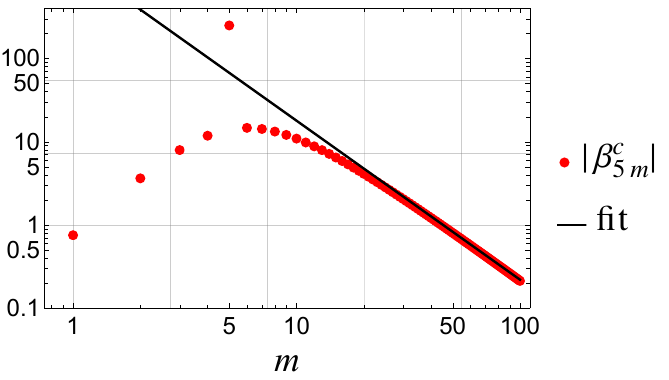}
		\caption{\label{fig:even_beta5m_coeff}}
	\end{subfigure}
	\hfill
	\begin{subfigure}[b]{0.47\textwidth}
		\centering
		\includegraphics[width=\textwidth]{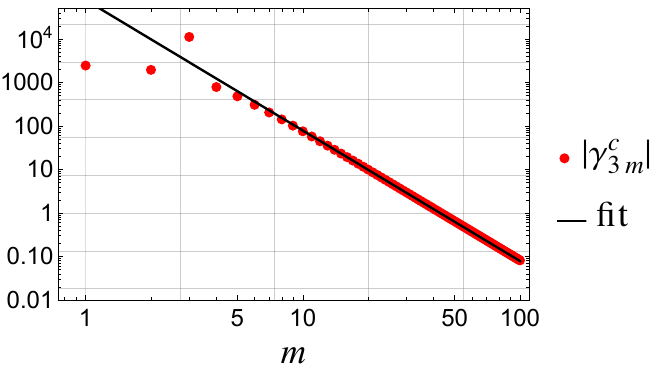}
		\caption{\label{fig:even_gamma3m_coeff}}
	\end{subfigure}
	\caption{Log-log plots demonstrating the convergence rate of the spectral expansions of  (a) the second derivative of the fifth even eigenfunction and (b) the fourth derivative of the third even eigenfunction. Symbols correspond to (a) $|\beta_{5m}^{c}|$ and (b) $|\gamma_{3m}^{c}|$. Solid lines are the best fits based on  values for $m\ge50$; specifically: (a) $1\,455\,m^{-1.91}$ and (b) $78\,160\,m^{-3}$. The series are truncated at $m=M=100$.~\label{fig:deriv_coeff}}
\end{figure} 
The `$c$' case of the expansion~\eqref{eq:beta_series} and \eqref{eq:sec_deriv_even_form} for the second derivative of even eigenfunctions was verified numerically for different values of $n$. In figure~\ref{fig:even_beta5m_coeff} the convergence rate of the spectral series~\eqref{eq:beta_series} for $n=5$ is demonstrated. We observe that $|\beta_{5m}^c|$ decays algebraically for large $m$ as $|\beta_{5m}^c|\sim m^{-1.91}$. This decay was confirmed for all the examined values of $n$.
 
This convergence rate could be very roughly estimated by inspecting  the derived expression for $\beta_{nm}$ and noting that, in this context, $m$ is increasing with $n$ being fixed. Looking at the upper branch of \eqref{eq:sec_deriv_even_form}, i.e., the case $n\neq m$,  and noting that the dominant terms as $m\rightarrow\infty$ are the powers of $\lambda_m^c$,  the hyperbolic functions of  $\lambda_m^c$ and the term $c_m^c$, 
we find $|\beta_{nm}^c| \sim c_m^c \cdot m^{-2}$ for $m$ large. A similar analysis of the lower branch of \eqref{eq:sec_deriv_even_form} explains why terms of the form $\beta_{nn}$ do not follow the overall convergence rate. This observation is confirmed for $\beta_{55}^c$ in figure~\ref{fig:even_beta5m_coeff}.  

For completeness, we also present the corresponding formula for expanding the second derivative of an odd eigenfunction into a series of the odd basis functions. From \eqref{eq:beta_series}, we find that
\begin{equation}
	\label{eq:sec_deriv_odd_form}
	\beta_{nm}^s =
	\begin{cases} 
		\frac{6 c_n^s c_m^s (\lambda _m^s)^3 (\lambda _n^s)^3}{(\lambda _m^s)^6 - (\lambda _n^s)^6} \left[ - \frac{\lambda _m^s \cos{\lambda _n^s} \left(\sin{2\lambda _m^s}-\sqrt{3} \cos{\lambda _m^s} \sinh{\sqrt{3}\,\lambda _m^s} + \sin{\lambda _m^s} \cosh{\sqrt{3}\lambda _m^s}\right)}{\cos{\lambda _m^s}+\cosh{\sqrt{3} \lambda _m^s}}\right. &\\
		\qquad \qquad \qquad \qquad \left. +\frac{\lambda _n^s \cos{\lambda _m^s} \left(\sin{2\lambda _n^s}-\sqrt{3} \cos{ \lambda _n^s} \sinh{\sqrt{3}\,\lambda _n^s}+\cos{\lambda _n^s} \cosh{\sqrt{3} \lambda _n^c}\right)}{\cos{\lambda _n^c}-\cosh{\sqrt{3} \lambda _n^c}}\right]\,\hfill \text{for} \  n\neq m,&\\[7mm]
		 \frac{\lambda _n^s {(c_n^s)}^2}{{{2 (\cos \lambda _n^s + \cosh \sqrt{3} \lambda _n^s )^2}}}     \bigg[\lambda _n^s \left(- \cos{2\lambda _n^s} + \sinh^2{\sqrt{3}\lambda _n^s}+\cosh^2{\sqrt{3}\,\lambda _n^s} \right. &\\
		\left. \qquad  \qquad \qquad \qquad \quad  \,- 4 \sqrt{3} \cos^2{\lambda _n^s}\sin{\lambda_n^s} \sinh{\sqrt{3}\,\lambda _n^s} + 4 \sin^2{\lambda _n^s}\cos{\lambda_n^s}\cosh{\sqrt{3}\,\lambda _n^s}\right)  & \\
		\qquad \qquad \ +\,2 \cos{\lambda _n^s} (\cos{\lambda _n^s}+\cosh{\sqrt{3} \lambda _n^s}) &\\
		\qquad \qquad \qquad \times \left(\sqrt{3} \cos{\lambda _n^s} \sinh{\sqrt{3}\lambda _n^s} - \sin{\lambda _n^s} \cosh{\sqrt{3} \lambda _n^s}-\sin{2 \lambda _n^s}\right)\bigg]\,\hfill \text{for} \  n= m,&
	\end{cases}
\end{equation}
where $c_n^s$ and $c_m^s$ are given by \eqref{eq:c_m^s_defn}. 
The `$s$' case of the expansion~\eqref{eq:beta_series} and \eqref{eq:sec_deriv_odd_form} for the second derivative of odd eigenfunctions was also verified numerically for different values of $n$. Although, we do not show it in a separate plot, we checked the convergence rate of the spectral series~\eqref{eq:beta_series} and \eqref{eq:sec_deriv_odd_form} for $n=5$, observing that $|\beta_{5m}^s|\sim m^{-1.94}$. This decay was confirmed for all the examined values of $n$, and can again be justified by examining the derived expression~\eqref{eq:sec_deriv_odd_form} for $\beta_{nm}^s$.

We also derived the corresponding expressions for expanding the fourth derivatives of our basis functions (see \eqref{eq:gamma_series}). We first present the formulas for the even case. For a fixed $n\in\mathbb{N}$ and $m=1,2,3,\ldots$, we found
\begin{subequations}
		\label{eq:fourth_deriv_even_form_all}
\begin{equation}
	\label{eq:fourth_deriv_even_form}
	\gamma_{nm}^c =
	\begin{cases} 
		\frac{3 c_n^c c_m^c (\lambda _n^c)^6}{(\lambda _m^c)^6 - (\lambda _n^c)^6} \left[ -\frac{(\lambda _m^c)^3 \sin{\lambda _m^c} \left(-3 + \cos{2\lambda _n^c} + 2\cos{\lambda _n^c}\cosh{\sqrt{3}\lambda _n^c}\right)}{\cos{\lambda _n^c} - \cosh{\sqrt{3} \lambda _n^c}}\right. &\\
		\qquad \qquad \qquad \qquad \left. + \frac{(\lambda _n^c)^3 \sin{\lambda _n^c} \left(-3 + \cos{2\lambda _m^c} + 2\cos{\lambda _m^c}\cosh{\sqrt{3}\lambda _m^c}\right)}{\cos{\lambda _m^c} - \cosh{\sqrt{3} \lambda _m^c}} \ \right]\,\hfill \text{for} \  n\neq m,&\\[7mm]
		\frac{(\lambda _n^c)^3 {(c_n^c)}^2}{{{8 \left(\cos \lambda _n^c -\cosh{\!\sqrt{3}\lambda _n^c} \right)^2}}}     \bigg[ 6 \sin{2 \lambda _n^c} \left(\cosh{2\sqrt{3}\lambda _n^c} + 4\right) -3\sin{4\lambda_n^c} - 48\sin{\lambda_n^c}\cosh{\!\sqrt{3}\,\lambda _n^c}&\\
		\qquad   \ + 4\,\lambda _n^c \Big( - 4\sqrt{3}\sin^3{\!\lambda_n^c} \sinh{\!\!\sqrt{3}\lambda _n^c} -  4\cos^3{\!\lambda_n^c} \cosh{\!\!\sqrt{3}\lambda _n^c}&\\
		\qquad \qquad \qquad \qquad	+ 3\cos{2\lambda _n^c} +\cosh{2\!\sqrt{3}\,\lambda _n^c}\Big) \bigg]\,\ \hfill  \text{for} \  n= m.&
	\end{cases}
\end{equation}
As already mentioned in subsection~\ref{subsec:MethForm}, the coefficients $\gamma_{n0}^c\neq0$ and are given by
\begin{flalign}
		\label{eq:fourth_deriv_even_zero_form}
 \quad	\ \ \gamma_{n0}^c = 6 c_n^c (\lambda_n^c)^3 \sin{\lambda_n^c} \ \ \qquad  \qquad \ \ \qquad  \qquad \ \ \qquad  \qquad \ \ \qquad  \qquad   \quad \ \ \ \ \ \text{for} \ n\in\mathbb{N}.&&
\end{flalign}	
\end{subequations}
In figure~\ref{fig:even_gamma3m_coeff} the convergence rate of the spectral series~\eqref{eq:gamma_series} and \eqref{eq:fourth_deriv_even_form_all} for $n=3$ is demonstrated, observing that $|\gamma_{3m}^c|\sim m^{-3}$. This decay was confirmed for all the examined values of $n$, and can again be justified by examining the derived expression~\eqref{eq:fourth_deriv_even_form} for $\gamma_{nm}^s$.


The corresponding expression for the fourth derivative of the odd eigenfunctions reads  
\begin{equation}
	\label{eq:fourth_deriv_odd_form}
	\gamma_{nm}^s =
	\begin{cases} 
		\frac{6 c_n^s c_m^s (\lambda _n^s)^6}{(\lambda _n^s)^6 - (\lambda _m^s)^6} \left[ \frac{(\lambda _m^s)^3 \cos{\lambda _m^s} \sin{\lambda _n^s}\left(\cosh{\sqrt{3}\lambda _n^s} - \cos{\lambda _n^s}\right)}{\cos{\lambda _n^s}+\cosh{\sqrt{3} \lambda _n^s}}\right. &\\
		\qquad \qquad \qquad \qquad \left. + \frac{(\lambda _n^s)^3 \sin{\lambda _m^s}\cos{\lambda _n^s}\left(\cos{\lambda _m^s} - \cosh{\sqrt{3}\lambda _m^s}\right)}{\cos{\lambda _m^s}+\cosh{\sqrt{3} \lambda _m^s}} \ \right]\ \quad \hfill \text{for} \  n\neq m,&\\[7mm]
		\frac{(\lambda _n^s)^3 {(c_n^s)}^2}{{{8 (\cos \lambda _n^s + \cosh \sqrt{3} \lambda _n^s )^2}}}     \bigg[ 6 \sin{2 \lambda _n^s} (\cos{2 \lambda _n^s}-\cosh{2\sqrt{3}\lambda _n^s}) &\\
		\qquad \qquad \qquad  \  + \, 4 \lambda _n^s  \times  \bigg(-\cos{2\lambda _n^s} +\cosh^2{\sqrt{3}\,\lambda _n^s} + \sinh^2{\sqrt{3}\lambda _n^s} &\\
		\qquad \qquad \qquad \quad	+  2\sin{2\lambda _n^s} \left(\sin{\lambda _n^s}\cosh{\sqrt{3}\,\lambda _n^s} + \sqrt{3} \cos{\lambda _n^s}\sinh{\sqrt{3}\,\lambda _n^s} \right) \bigg) \bigg]\,\ \quad  \hfill  \text{for} \  n= m.&
	\end{cases}
\end{equation}
The decay rate of the odd coefficients (not plotted here) is $|\gamma_{3m}^s|\sim m^{-3}$, matching that of its even counterparts. This was confirmed for all the examined values of $n$ and is expected from the expression~\eqref{eq:fourth_deriv_odd_form} for $\gamma_{nm}^s$.

For the model problems considered in section~\ref{sec:model_problems}, the following expansion formulas for even powers of $x$ are also required: 
\begin{equation}
	\label{eq:even_powers_x_expansion}
		x^p = \sum_{m=0}^\infty \chi^{\{p\}}_m \psi_{m}^c(x), \qquad\quad\ \ \chi^{\{p\}}_m = \Big\langle  x^p,\psi_m^c(x)\Big\rangle ,
\end{equation}
where $p=2,4,\ldots,12$. The expressions for the coefficients $\chi^{\{p\}}_m$ for $m\ge1$ are given in the Appendix. The case $m=0$ is not needed, as will become apparent through \eqref{eq:model_1_u0} and 
\eqref{eq:model_2_u0} below.

\section{Model problems: Results and discussion}\label{sec:model_problems}

In this first foray into Galerkin methods for sixth-order-in-space parabolic equations, and the construction of the associated higher-order beam eigenfunctions, we would like to restrict to the steady case, such that $\partial u/\partial t = 0$. In this context, we would like to demonstrate the spectral accuracy of the eigenfunction expansion on two simple \emph{model} boundary-value problems (BVPs), which have simple exact solutions.

\subsection{Model problem I}

Model problem I is the BVP:
\begin{subequations}\label{eq:model_even_1}
    \begin{align}\label{eq:model_even_1:de}
    	\frac{\rd^6 u}{\rd x^6} + 14\, 400\, u &= f(x) ,\\
		\left.\frac{\rd u}{\rd x}\right|_{x=\pm1} = \left.\frac{\rd^2 u}{\rd x^2}\right|_{x=\pm1} = \left.\frac{\rd^5 u}{\rd x^5}\right|_{x=\pm1} &= 0 , \label{model_even_1:bc}
	\end{align}
where
	\begin{equation}
    	f(x) = 216\,000 x^2 - 691\,200 x^4  + 377\,280 x^6  + 216\,000 x^8 - 86\,400 x^{10} + 14\,400 x^{12}.
		\label{eq:model_even_1:rhs}
	\end{equation}%
\end{subequations}
The problem~\eqref{eq:model_even_1} admits the exact solution:
\begin{equation}
	u_\mathrm{exact}(x) = (x-1)^6(x+1)^6.
	\label{eq:model_even_1:exact}
\end{equation}
The exact solution~\eqref{eq:model_even_1:exact} as well as $f(x)$ in \eqref{eq:model_even_1:de} are even functions, and BCs~\eqref{model_even_1:bc} are symmetric. Thus, we need \emph{only} use the \emph{even} eigenfunctions $\psi_0^c(x)=1$ and $\big\{\psi_n^c(x)\big\}_{n=1}^{\infty}$ for the spectral expansion.		
		
Applying the Galerkin spectral method introduced in section~\ref{sec:Galerkin} leads us to introduce the truncated series 
\begin{equation}
	u(x) \approx u_{\mathrm{spectral}}(x) = \frac{1}{2}u_0^c \psi_0^c(x) + \sum_{n=1}^{M} u_n^c \psi_n^c(x)\,.
	\label{eq:u_x_expansion_formula}
\end{equation}
Now, we substitute \eqref{eq:u_x_expansion_formula} into \eqref{eq:model_even_1:de}, take inner products with $\psi_0^c(x)=1$ and $\big\{\psi_{m}^c(x)\big\}_{m=1}^{M}$, and employ the orthogonality relations, as in the derivation of \eqref{eq:dynamical_system}. First, the coefficient $u_0^c$ is found as
\begin{equation}
	14\,400 u_0^c = \int_{-1}^{+1} f(x) \psi_0(x) \,\rd x \quad \Rightarrow \quad u_0^c = \frac{2\,048}{3\,003}\,.
	\label{eq:model_1_u0}
\end{equation}
Next, the remaining coefficients $\{u_n^c\}_{n=1}^M$ are obtained as the solution of the \emph{diagonal} system of equations  
\begin{multline}
	\big[14\,400-(\lambda^c_{n})^6\big] \delta_{nm} u_{n}^c = 216\,000 \chi^{\{2\}}_{m} - 691\,200 \chi^{\{4\}}_{m} + 377\,280 \chi^{\{6\}}_{m}  \\ 
	+ 216\,000 \chi^{\{8\}}_{m} - 86\,400 \chi^{\{10\}}_{m} + 14\,400 \chi^{\{12\}}_{m},\qquad 	
	m=1,2,\ldots,M,
	\label{eq:model_1_um}
\end{multline}
where $\delta_{nm}$ is Kronecker's delta.

The exact solution~\eqref{eq:model_even_1:exact} and the spectral approximation~\eqref{eq:u_x_expansion_formula}--\eqref{eq:model_1_um} using the sixth-order eigenfunctions are compared in figure~\ref{fig:model_even_1_shape}, showing they agree identically (at least visually). As seen in figure~\ref{fig:model_even_1_error}, the spectral expansion is \emph{highly} accurate with the maximum absolute error being $\le 5\times 10^{-13}$. Despite the Gibbs effect near $x=\pm1$, figure~\ref{fig:model_even_1_coeff_decay} shows that the overall convergence rate of the spectral expansion is $\mathrm{O}(n^{-8})$. This extremely rapid convergence is due to the very definition of our eigenfunctions, namely that they satisfy a sixth-order BVP. Put differently: when solving the algebraic system~\eqref{eq:model_1_um}, we are essentially dividing with quantities involving $(\lambda_n^c)^6$, and it is thanks to this fact that we have the impressive convergence rate, overcoming the more moderate decay rate of the coefficients appearing in the right-hide side of  \eqref{eq:model_1_um}.

\begin{figure}
	\centering
	\begin{subfigure}[b]{0.4\textwidth}
		\centering
		\includegraphics[width=\textwidth]{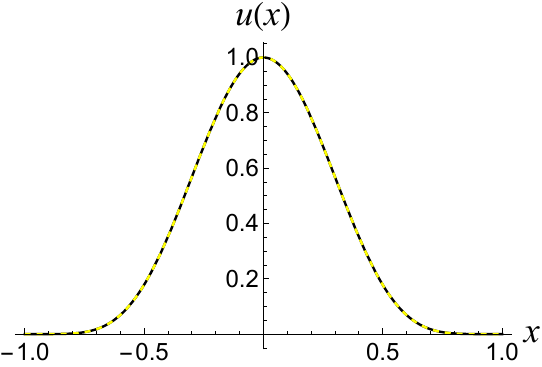}
		\caption{\label{fig:model_even_1_shape}}
	\end{subfigure}
	\qquad
	\begin{subfigure}[b]{0.4\textwidth}
		\centering
		\includegraphics[width=\textwidth]{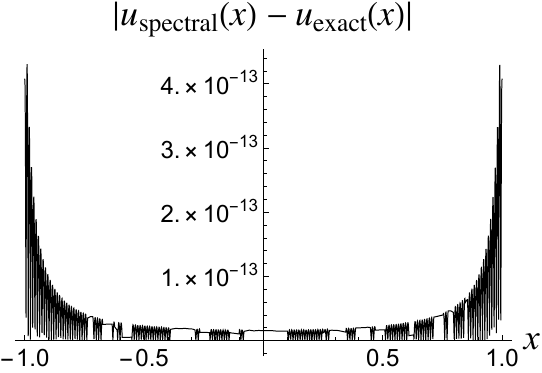}
		\caption{\label{fig:model_even_1_error}}
	\end{subfigure}
	\\
	\begin{subfigure}[b]{0.47\textwidth}
		\centering
		\includegraphics[width=\textwidth]{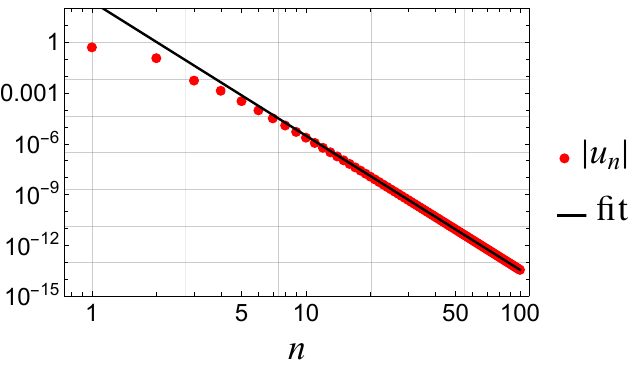}
		\caption{\label{fig:model_even_1_coeff_decay}}
	\end{subfigure}
	\caption{(a) Comparison of the exact~\eqref{eq:model_even_1:exact} (solid) and spectral expansion~\eqref{eq:u_x_expansion_formula}--\eqref{eq:model_1_um} (dashed) solution profiles for model problem I with $M=100$. (b) Absolute error between the exact solution and the spectral expansion. (c) Convergence rate of the spectral expansion with symbols showing $|u_n^c|$ and the solid line being the best fit $259 n^{-7.94}$  based on values for $n\ge50$.~\label{fig:model_even_1}}
\end{figure}

\subsection{Model problem II}

Model problem II is the BVP:
\begin{subequations}\label{eq:model_even_2}
    \begin{align}\label{eq:model_even_2:de}
		\frac{\rd^6 u}{\rd x^6}  -5544 \frac{\rd^2 u}{\rd x^2} -199\,584\, u &= f(x) ,\\
		\left.\frac{\rd u}{\rd x}\right|_{x=\pm1} = \left.\frac{\rd^2 u}{\rd x^2}\right|_{x=\pm1} = \left.\frac{\rd^5 u}{\rd x^5}\right|_{x=\pm1} &= 0,\label{eq:model_even_2:bc}
	\end{align}
where
	\begin{equation}
		f(x)=-199\,584 x^{12} + 465\,696 x^{10} - 574\,560 x^ 4+ 50\,1984 x^2 - 147\,456 .
		\label{eq:model_even_2:rhs} 
	\end{equation}%
\end{subequations}
Problem~\eqref{eq:model_even_2} admits the same exact solution~\eqref{eq:model_even_1:exact} as problem~\eqref{eq:model_even_1}. For the same reasons as above, we only use the {even} eigenfunctions in the spectral expansion for this problem.

\begin{figure}
	\centering
	\begin{subfigure}[b]{0.4\textwidth}
		\centering
		\includegraphics[width=\textwidth]{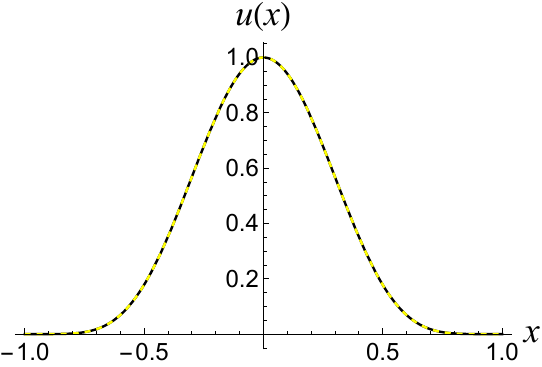}
		\caption{\label{fig:model_even_2_shape}}
	\end{subfigure}
	\qquad
	\begin{subfigure}[b]{0.4\textwidth}
		\centering
		\includegraphics[width=\textwidth]{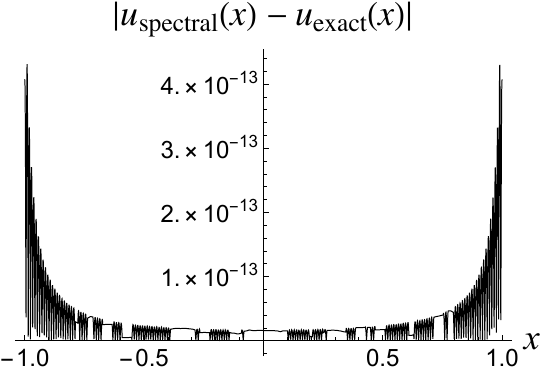}
		\caption{\label{fig:model_even_2_error}}
	\end{subfigure}
	\\
	\begin{subfigure}[b]{0.47\textwidth}
		\centering
		\includegraphics[width=\textwidth]{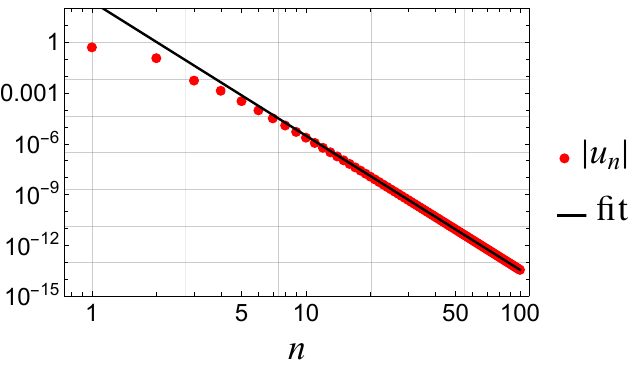}
		\caption{\label{fig:model_even_2_coeff_decay}}
	\end{subfigure}
	\caption{(a) Comparison of the exact~\eqref{eq:model_even_1:exact} (solid) and spectral expansion~\eqref{eq:u_x_expansion_formula}, \eqref{eq:model_2_u0}, \eqref{eq:lin_sys_model_even_2} (dashed) solution profiles for model problem II with $M=100$. (b) Absolute error between the exact solution and the spectral expansion. (c) Convergence rate of the spectral expansion with symbols showing $|u_n^c|$ and the solid line being the best fit $259 n^{-7.94}$  based on values for $n\ge50$.~\label{fig:model_even_2}}
\end{figure}

We again use expansion~\eqref{eq:u_x_expansion_formula}, substitute it into \eqref{eq:model_even_2:de}, take successive inner products with $\psi_0(x)=1$ and $\big\{\psi_{m}^c(x)\big\}_{m=1}^{M}$, and employ the orthogonality relations, as in the derivation of \eqref{eq:dynamical_system}. First, the coefficient $u_0^c$ is found to be
\begin{equation}
	- 199\,584 u_0^c = \int_{-1}^{+1} f(x) \psi_0(x) \,\rd x \quad \Rightarrow \quad u_0^c = \frac{2\,048}{3\,003}.
	\label{eq:model_2_u0}
\end{equation}
Then, the remaining coefficients $\{u_n^c\}_{n=1}^M$ are obtained as the solution of the \emph{linear} system of equations:
\begin{multline}
	\label{eq:lin_sys_model_even_2}
	\sum_{n=1}^M \left\{\left[-199\,584-(\lambda^c_{n})^6\right]\delta_{nm} -5\,544 \beta_{nm}^c \right\} u_n^c \\ = 
	-199\,584 \chi^{\{12\}}_{m} + 465\,696 \chi^{\{10\}}_{m} - 574\,560 \chi^{\{4\}}_{m} + 50\,1984 \chi^{\{2\}}_{m},\qquad m=1,2,\hdots,M.
\end{multline}
The coefficient matrix
\begin{equation}
	\label{eq:coeff_matrix_model_even_2}
	\mathbf{A}=[a_{nm}],\quad \text{where}\quad a_{nm} = \left[-199\,584-(\lambda^c_{n})^6\right]\delta_{nm} -5\,544 \beta_{mn}^c , \qquad n,m=1,2,\ldots,M,
\end{equation}	
of the linear system~\eqref{eq:lin_sys_model_even_2} is now full, but \emph{symmetric negative definite} (with $\|\mathbf{A}^{-1}\|<1$). Thus, system~\eqref{eq:lin_sys_model_even_2} can be easily and efficiently solved using the $\mathbf{L}\mathbf{D}\mathbf{L}^\top$ factorization.

The exact solution~\eqref{eq:model_even_1:exact} and the spectral approximation~\eqref{eq:u_x_expansion_formula}, \eqref{eq:model_2_u0}, \eqref{eq:lin_sys_model_even_2} using the sixth-order eigenfunctions are compared in figure~\ref{fig:model_even_2_shape}. Again, as seen in figure~\ref{fig:model_even_2_error}, the spectral expansion is \emph{highly} accurate with the maximum absolute error being again $\le 5\times 10^{-13}$. Model problem I and II have the same exact solution \eqref{eq:model_even_1:exact}, so the spectral expansions in this and the previous subsection are of the same function, even if the BVPs are different. Thus, it should be no surprise that figures~\ref{fig:model_even_1_error} and \ref{fig:model_even_2_error} look almost identical.
Note that, as shown in figure~\ref{fig:model_even_2_coeff_decay}, the spectral expansion has rapid convergence, with the coefficients decaying in magnitude as $\mathrm{O}(n^{-8})$, \emph{despite} the moderate convergence rate of $\beta_{nm}$ demonstrated in figure~\ref{fig:even_beta5m_coeff}. This observation further emphasizes the motivation for the proposed approach.

\section{Conclusions}\label{sec:Conclusions}

We proposed an {efficient} and {highly accurate} Galerkin spectral method for the solution of sixth-order boundary-value problems arising in the study of elastic-plated thin films. We explicitly constructed the sixth-order eigenfunctions and appealing to classical results from Sturm--Liouville theory, we showed that these functions form complete orthonormal bases of even and odd functions for $L^2[-1,1]$. The considered boundary conditions were dictated by a specific physical problem under consideration, and they led to a self-adjoint eigenvalue problem. Accurate asymptotic formulas for the corresponding eigenvalues were found. Further, we developed exact expansion formulas for the second and fourth derivatives of the eigenfunctions and for even powers of the independent spatial variable, $x$.  

The proposed spectral method was tested by solving two model problems using 101 terms ($M=100$) in the spectral expansion. In both cases, the absolute error was less than $5\times10^{-13}$ and the convergence rate of the spectral series was found to be eighth-order algebraic ($\mathrm{O}(n^{-8})$, where $n$ is the number of terms), which is even greater than the order (sixth) of the EVP.
    
In future work, we would like to consider the unsteady IBVP and develop time-stepping schemes (say, of Crank--Nicolson type) for the proposed Galerkin method. In addition, we would like to consider non-self-adjoint versions of EVP~\eqref{eq:6th_EVP}, which can arise when both bending and tension are important ($\mathscr{T}\ne0$) and/or due to the importance of gravity (when the elastic Bond number $\mathscr{B}\ne0$, see \cite{Gabay2023}).


\ack ICC would like to acknowledge the hospitality of the University of Nicosia, Cyprus, where this work was completed thanks to a Fulbright US Scholar award from the US Department of State, and the US National Science Foundation, which supports his research on interfacial dynamics under grant CMMI-2029540.

\appendix

\section*{Appendix}

\setcounter{section}{1}

The expressions for the coefficients with $m\ge1$ in the expansion formulas~\eqref{eq:even_powers_x_expansion} for the even powers of $x$ are:
\begin{subequations}\label{eq:even_powers_x_coeff_exp}
	\begin{align}
		\chi^{\{2\}}_m&= -\tfrac{4\,c_m^c}{(\lambda_m^c)^3 (\cos{\lambda_m^c} -\cosh{\sqrt{3} \lambda_m^c})} \times  \left[-\lambda_m^c \cos{2 \lambda_m^c} + \sqrt{3} \lambda_m^c \sin{\lambda_m^c} \sinh{\sqrt{3}\lambda_m^c} \right.\label{eq:chi_sq_exp} \\
		& \phantom{=}\ + \left.3\sin{\lambda_m^c}\cos{\lambda_m^c} + (\lambda_m^c \cos {\lambda_m^c} - 3 \sin{\lambda_m^c}) \cosh{\sqrt{3}\lambda_m^c}\right]\,,\nonumber \\
		\chi^{\{4\}}_m&=\tfrac{8\,c_m^c}{(\lambda_m^c)^4 (\cos{\lambda_m^c} -\cosh{\sqrt{3} \lambda_m^c})} \times  \left[ 
		((\lambda_m^c)^2-6) (\cos {2\lambda_m^c} - \cos{\lambda_m^c}\cosh{\sqrt{3}\lambda_m^c}) \right.\label{eq:chi_quad_exp} \\
		&\phantom{=}\ -\left.  \left(\sqrt{3} ((\lambda_m^c) ^2+6) \sinh{\sqrt{3}\lambda_m^c} + 9 \lambda _m^c(\cos{\lambda_m^c} - \cosh{\sqrt{3}\lambda_m^c})\right) \sin{\lambda_m^c} \right],  \nonumber \\
		\chi^{\{6\}}_m&= \tfrac{6\,c_m^c}{(\lambda_m^c)^6 (\cos{\lambda_m^c} -\cosh{\sqrt{3} \lambda_m^c})} \times  \bigg[ 360 + 2 ((\lambda_m^c)^4 - 20 (\lambda_m^c)^2 - 60) \cos{2\lambda_m^c}   \label{eq:chi_six_exp} \\
		 &\phantom{=}\ - 15(\lambda_m^c)^3 \sin{2\lambda_m^c} + \Big(30 (\lambda_m^c)^3 \sin{\lambda_m^c} - 2((\lambda_m^c)^4 - 20 (\lambda_m^c)^2+120) \cos{\lambda_m^c}\Big) \cosh{\sqrt{3}\lambda_m^c} \nonumber \\ 
		 &-2 \sqrt{3} (\lambda_m^c)^2 ((\lambda_m^c)^2 + 20) \sin{\lambda_m^c} \sinh{\sqrt{3}\lambda_m^c}  \bigg], \nonumber \\
	     \chi^{\{8\}}_m&= \tfrac{8\,c_m^c}{(\lambda_m^c)^9 (\cos{\lambda_m^c} -\cosh{\sqrt{3} \lambda_m^c})} \times \bigg[2520 (\lambda_m^c)^3   - 21 ((\lambda_m^c)^6-720) \sin{2\lambda_m^c} \\
	     &\phantom{=}\  + 2 ((\lambda_m^c)^6-42(\lambda_m^c)^4 - 420(\lambda_m^c)^2 - 5040) \lambda_m^c  \cos{2\lambda_m^c} \nonumber \\ 
	     & - 2 \sqrt{3}\lambda_m^c ((\lambda_m^c)^6 + 42 (\lambda_m^c)^4 - 5040) \sin{\lambda_m^c}\sinh{\sqrt{3}\lambda_m^c}  \nonumber \\ 
	      &\phantom{=}\  + 2 \Big(21 ((\lambda_m^c)^6-720) \sin{\lambda_m^c} - \lambda_m^c((\lambda_m^c)^6 - 42 (\lambda_m^c)^4 + 840 (\lambda_m^c)^2 - 5040) \cos{\lambda_m^c}\Big) \cosh{\sqrt{3}\lambda_m^c} 
	      \bigg], \nonumber \\
	    \chi^{\{10\}}_m&= \tfrac{20\,c_m^c}{(\lambda_m^c)^{10} (\cos{\lambda_m^c} -\cosh{\sqrt{3}\lambda_m^c})} \times  \bigg[  4536 (\lambda_m^c)^4-\tfrac{27}{2} ((\lambda_m^c)^6-20160) \lambda_m^c  \sin{2\lambda_m^c}   \\
	    & +  ((\lambda_m^c)^8 - 72 (\lambda_m^c)^6 - 1512(\lambda_m^c)^4 - 60480(\lambda_m^c)^2 + 362880) \cos{2\lambda_m^c}  \nonumber \\
	       & -\sqrt{3}((\lambda_m^c)^8 + 72 (\lambda_m^c)^6 - 60480 (\lambda_m^c)^2 - 362880) \sin{\lambda_m^c}\sinh{\sqrt{3}\lambda_m^c} \nonumber \\
	          & + \cosh{\sqrt{3}\lambda_m^c} \cdot \bigg(27 \lambda_m^c((\lambda_m^c)^6 - 20160)\sin{\lambda_m^c} \nonumber \\
	          & - ((\lambda_m^c)^8 - 72 (\lambda_m^c)^6 + 3024(\lambda_m^c)^4 - 60480(\lambda_m^c) ^2 + 362880) \cos{\lambda_m^c}\bigg) 
	     \nonumber \bigg],\\
		 \chi^{\{12\}}_m&= \tfrac{12\,c_m^c}{(\lambda_m^c)^{10} (\cos{\lambda_m^c} -\cosh{\sqrt{3} \lambda_m^c})} \times  \bigg[  23760 ((\lambda_m^c) ^6-5040) - 33(\lambda_m^c)^3((\lambda_m^c)^6-151200)\sin{2\lambda_m^c} \label{eq:twe_six_exp} \\ 
		& + 2 ((\lambda_m^c) ^{10}-110(\lambda_m^c)^8 - 3960(\lambda_m^c)^6 - 332640 (\lambda_m^c)^4  + 6652800 (\lambda_m^c) ^2 + 19958400) \cos{2\lambda_m^c} \nonumber \\
		& -2 \sqrt{3} (\lambda_m^c)^2 ((\lambda_m^c)^8 + 110 (\lambda_m^c)^6 - 332640 (\lambda_m^c)^2-6652800) \sin{\lambda_m^c} \sinh{\sqrt{3}\lambda_m^c} \nonumber \\
		 &  + 2\cosh{\sqrt{3}\lambda_m^c} \cdot \bigg( 33 (\lambda_m^c)^3 ((\lambda_m^c)^6-151200)\sin{\lambda_m^c} \nonumber \\
		 & - \left((\lambda_m^c)^{10} - 110 (\lambda_m^c)^8 + 7920 (\lambda_m^c)^6 - 332640 (\lambda_m^c)^4 + 6652800 (\lambda_m^c) ^2 - 39916800\right) \cos {\lambda_m^c} \bigg) \bigg].   \nonumber
	\end{align}
\end{subequations}   	

\section*{References}
\bibliography{Sixth-Order.bib}

\end{document}